\numberwithin{equation}{section}
\newcommand{\argmax}{\mathop{\rm arg\max}}
\newcommand{\R}{\mathbb{R}}
\numberwithin{equation}{section}
\newtheorem{thm}{Theorem}
\newtheorem{lem}{Lemma}
\newtheorem{rem}{Remark}
\newtheorem{prop}{Proposition}
\newcommand{\supp}{\operatorname{supp}}
\newcommand{\mF}{\mathcal{F}}
\begin{document}

\title{A regularity class for the roots of non-negative functions}

\author{Kolyan Ray}\thanks{The research leading to these results has received funding from the European Research Council under ERC Grant Agreement 320637.}
\address{Mathematical Institute, Leiden University, 2333 CA Leiden, The Netherlands}
\email{\href{mailto:k.m.ray@math.leidenuniv.nl}{k.m.ray@math.leidenuniv.nl}}
\email{\href{mailto:schmidthieberaj@math.leidenuniv.nl}{schmidthieberaj@math.leidenuniv.nl}}

\author{Johannes Schmidt-Hieber}

\keywords{Regularity of roots; H\"older spaces; differential inequalities; wavelets.}
\subjclass[2010]{Primary 26A16; secondary 	26A27.}

\begin{abstract}
We investigate the regularity of the positive roots of a non-negative function of one-variable. A modified H\"older space $\mathcal{F}^\beta$ is introduced such that if $f\in \mathcal{F}^\beta$ then $f^\alpha \in C^{\alpha \beta}$. This provides sufficient conditions to overcome the usual limitation in the square root case ($\alpha = 1/2$) for H\"older functions that $f^{1/2}$ need be no more than $C^1$ in general. We also derive bounds on the wavelet coefficients of $f^\alpha$, which provide a finer understanding of its local regularity.
\end{abstract}

\maketitle

%


%

\vspace{-1cm}

\section{Introduction}

We study the smoothness properties of roots of non-negative H\"older continuous functions on $[0,1]$. More precisely, we derive sufficient flatness-type conditions such that if $f$ is in a H\"older space with index $\beta$, then $f^\alpha$ has H\"older regularity $\alpha\beta$ for $0< \alpha\leq 1$.

This question was first studied in the square-root case ($\alpha=1/2$) by Glaeser \cite{Glaeser1963} and Dieudonn\'e \cite{Dieudonne1970}, who showed that any non-negative, twice continuously differentiable function on $\R$ admits a continuously differentiable square root. This result is sharp in general in the sense that there exist infinitely differentiable functions such that no admissible square root has H\"older index larger than one (Theorem 2.1 in \cite{Bony2006}). Flatness conditions thus become necessary for $\beta>2$ in order to ensure that a $\beta$-smooth function has an admissible square root with H\"older index greater than one. Some extensions of the result of Glaeser can be found in Lengyel \cite{Lengyel75} and Mandai \cite{Mandai1985}. Macchia \cite{Macchia78}, Reichard \cite{Reichard80}, Rainer \cite{Rainer2011} and Colombini et al. \cite{colombini2012} studied what regularity $f^{1/r}$ has in a neighbourhood of a zero of $f$, provided that enough derivatives of $f$ vanish at that point.

The key property for additional regularity of roots is flatness, in the sense that whenever $f$ is small then so are its derivatives, and in a series of recent papers, Bony et al. \cite{Bony2006, Bony2006b, Bony2010} consider flatness conditions for \textit{admissible} square roots. Recall that $g$ is an admissible square root of $f$ if $f=g^2$ (allowing $g$ to switch signs). The additional freedom to pick an admissible square root does not always help however. Suppose $f$ possesses a converging sequence of local minima $(x_n)_n$ such that $f(x_n)$ is strictly positive for all $n$ and $f(x_n)\rightarrow 0$ as $n\rightarrow \infty.$ The non-negative square root is then the most regular admissible square root and there exist infinitely differentiable functions of this form with no admissible square root having H\"older index larger than one \cite{Bony2006}. In this article we restrict to studying roots of non-negative functions that do not change sign, which we without loss of generality take to be positive.

The main result of this article is stated below, with $\|\cdot\|_{C^\beta([0,1])}$ and $\|\cdot\|_{\mF^\beta ([0,1])}$ denoting the usual and ``flat" H\"older norms on $[0,1]$ respectively, both of which are defined in Section \ref{sec.Hoeld_cones}.

\begin{thm}\label{thm.main_result}
For $\alpha \in(0,1]$, $\beta >0$ and all non-negative $f\in \mF^\beta([0,1])$,
\begin{align*}
\|f^\alpha \|_{C^{\alpha\beta}([0,1])} \leq \|f^\alpha\|_{\mF^{\alpha \beta}([0,1])} \leq C(\alpha,\beta) \|f\|_{\mF^\beta ([0,1])}^\alpha.
\end{align*}
\end{thm}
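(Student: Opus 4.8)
The plan is to reduce the H\"older bound to a decay estimate on the wavelet coefficients of $g:=f^\alpha$ and to prove the latter by a scale-by-scale dichotomy governed by the flatness structure of $\mathcal{H}^\beta$. Fix a compactly supported wavelet basis $\{\psi_{jk}\}$ with $N>\alpha\beta$ vanishing moments; since $\alpha\beta\notin\mathbb{N}$, the usual wavelet characterisation gives $\|g\|_{C^{\alpha\beta}([0,1])}\asymp\sup_{j,k}2^{j(\alpha\beta+1/2)}|\langle g,\psi_{jk}\rangle|$, up to the finitely many coarse-scale coefficients, which are harmless. By homogeneity --- the scaling $f\mapsto\lambda f$ acts as $\lambda$ on $\|f\|_{\mathcal{H}^\beta}$ and as $\lambda^\alpha$ on $\|f^\alpha\|_{C^{\alpha\beta}}$ --- I may normalise $\|f\|_{\mathcal{H}^\beta([0,1])}=1$ and aim to show $|\langle g,\psi_{jk}\rangle|\le C(\alpha,\beta)\,2^{-j(\alpha\beta+1/2)}$ uniformly.

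Two estimates coming from the flatness defining $\mathcal{H}^\beta$ drive everything. First, the Fa\`a di Bruno rule expresses $g^{(k)}$ as a sum of terms $f^{\alpha-p}\prod_i f^{(k_i)}$ with $\sum_i k_i=k$ and $p$ factors; inserting the flatness bounds $|f^{(k_i)}|\lesssim f^{1-k_i/\beta}$ makes every term of net order $f^{\alpha-k/\beta}$, so that $|g^{(k)}(x)|\lesssim f(x)^{\alpha-k/\beta}$ for all integers $0\le k\le m:=\lfloor\alpha\beta\rfloor$ (and $m\le\lfloor\beta\rfloor$ since $\alpha\le1$, so these derivatives genuinely exist). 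Second, the flatness bound $|f'|\lesssim f^{1-1/\beta}$ is equivalent to $f^{1/\beta}$ being Lipschitz, which makes the relevant dichotomy stable across an interval: for the support $I_{jk}=\supp\psi_{jk}$ of length $\asymp 2^{-j}$, either $f\le C2^{-j\beta}$ on all of $I_{jk}$, or $f\ge c2^{-j\beta}$ on all of $I_{jk}$, the constants depending only on the Lipschitz constant of $f^{1/\beta}$.

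In the flat case ($f\le C2^{-j\beta}$ throughout $I_{jk}$) no vanishing moments are needed: $\|g\|_{L^\infty(I_{jk})}=\sup_{I_{jk}}f^\alpha\lesssim 2^{-j\alpha\beta}$, so $|\langle g,\psi_{jk}\rangle|\le\|g\|_{L^\infty(I_{jk})}\|\psi_{jk}\|_{L^1}\lesssim 2^{-j\alpha\beta}2^{-j/2}$, as required. In the smooth case ($f\ge c2^{-j\beta}$ throughout $I_{jk}$) I subtract the degree-$m$ Taylor polynomial $P$ of $g$ at the centre of $I_{jk}$, which $\psi_{jk}$ annihilates, and bound the remainder by $\|g-P\|_{L^\infty(I_{jk})}\lesssim|I_{jk}|^{m+\gamma}[g^{(m)}]_{C^\gamma(I_{jk})}\asymp 2^{-j\alpha\beta}[g^{(m)}]_{C^\gamma(I_{jk})}$, where $\gamma:=\alpha\beta-m\in(0,1)$; provided $[g^{(m)}]_{C^\gamma(I_{jk})}\lesssim 1$ this again yields $|\langle g,\psi_{jk}\rangle|\lesssim 2^{-j(\alpha\beta+1/2)}$. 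Summing over both cases and invoking the wavelet characterisation gives the theorem.

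The crux, and the step I expect to be the main obstacle, is the uniform bound $[g^{(m)}]_{C^\gamma(I_{jk})}\lesssim 1$ in the smooth case. Since $C^\gamma$ is the scale-invariant seminorm, this reduces to a single homogeneous estimate: on each region $\{f\asymp\lambda\}$ the $(\beta-m)$-H\"older seminorm of $g^{(m)}$ is $\lesssim\lambda^{\alpha-1}$. Granting it, the variation length scale $\lambda^{1/\beta}$ of $f$ converts this into $[g^{(m)}]_{C^\gamma}\lesssim\lambda^{\alpha-1}(\lambda^{1/\beta})^{(\beta-m)-\gamma}=\lambda^{\alpha-(m+\gamma)/\beta}=1$, uniformly in $\lambda$, and patching across dyadic $\lambda$-blocks within $I_{jk}$ preserves the bound (this also subsumes the degenerate, uniformly positive case of Lemma \ref{lem.sqrt_lip}). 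Establishing the homogeneous estimate is the real analytic work: where $m+1\le\beta$ one differentiates the Fa\`a di Bruno expansion once more and applies the mean value theorem to the order-$(m+1)$ chain-rule bound, whereas in the delicate regime $\beta\in[m,m+1)$, where $g^{(m+1)}$ need not exist, one must balance the $(\beta-m)$-H\"older modulus of the top derivative $f^{(\lfloor\beta\rfloor)}$ against the positive powers of $f$ that flatness supplies to each monomial $f^{\alpha-p}\prod_i f^{(k_i)}$. Tracking these constants is precisely what pins down $C(\alpha,\beta)$ and, through the normalisation, confirms the homogeneous dependence $\|f\|_{\mathcal{H}^\beta}^\alpha$.
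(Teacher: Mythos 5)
Your proposal is correct in outline and shares its analytic core with the paper, but it organizes the wavelet estimate differently, so a comparison is worthwhile. Both arguments reduce the theorem to the decay $|\langle f^\alpha,\psi_{j,k}\rangle|\lesssim 2^{-j(\alpha\beta+1/2)}$ after normalising $\|f\|_{\mH^\beta}=1$, and both ultimately rest on the same hard estimate: the Fa\`a di Bruno expansion of $(f^\alpha)^{(k)}$ combined with the flatness inequalities, giving $|(f^\alpha)^{(k)}|\lesssim f^{\alpha-k/\beta}$ and a local $(\beta-\lfloor\beta\rfloor)$-H\"older bound on $(f^\alpha)^{(\lfloor\beta\rfloor)}$ with constant $\min(f(x),f(y))^{\alpha-1}$. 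This is precisely the ``homogeneous estimate'' you postulate and defer; it is Lemma \ref{lem.sqrt_lip} of the paper, and your sketch of how to obtain it (differentiate Fa\`a di Bruno once more and apply the mean value theorem when $\lfloor\alpha\beta\rfloor+1\le\beta$; otherwise balance the H\"older modulus of the top derivative $f^{(\lfloor\beta\rfloor)}$ against the powers of $f$ supplied by flatness) is exactly what that lemma's proof carries out, so nothing in your plan would fail. Where you diverge is in converting this local estimate into coefficient decay: you run a flat/non-flat dichotomy on each wavelet support ($f\lesssim 2^{-j\beta}$ everywhere, handled by the trivial $L^\infty$ bound with no vanishing moments, versus $f\gtrsim 2^{-j\beta}$ everywhere, handled by subtracting the degree-$\lfloor\alpha\beta\rfloor$ Taylor polynomial of $f^\alpha$), whereas the paper avoids any case analysis by regularizing, $f\mapsto f+\eta$, noting $|(f+\eta)^\alpha|_{C^\beta}\lesssim\eta^{\alpha-1}$ and $\|(f+\eta)^\alpha-f^\alpha\|_\infty\le\eta^\alpha$, and optimizing $\eta=2^{-j\beta}$ (Proposition \ref{prop.wav_decay_small}); your two cases are in effect the two terms of that optimization. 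Two small points in your write-up deserve care: the ``$(\beta-m)$-H\"older seminorm of $g^{(m)}$'' is not literally a first-order difference quotient when $\beta-\lfloor\alpha\beta\rfloor>1$ and must be read through the intermediate derivatives, as you later indicate; and the ``patching across dyadic $\lambda$-blocks'' is most cleanly done not by chaining but by the split $|x-y|\lessgtr a f(x)^{1/\beta}$ of Lemma \ref{lem.local_func_size}, since for $|x-y|>af(x)^{1/\beta}$ the pointwise bound \eqref{eq.deriv_bds_sqrt} alone gives $|g^{(m)}(x)-g^{(m)}(y)|\lesssim f(x)^{(\alpha\beta-m)/\beta}\lesssim |x-y|^{\alpha\beta-m}$ (the exponent being positive because $\alpha\beta\notin\mathbb{N}$), which is how the paper closes this step. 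Your route delivers the $C^{\alpha\beta}$ bound slightly more directly; the paper's regularization additionally yields the stronger localized decay $2^{-j(\beta+1/2)}f(x_0)^{\alpha-1}$ recorded in the second half of Proposition \ref{prop.wav_decay_small}.
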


In particular we see that $f\in \mF^\beta$ is a sufficient condition for $\sqrt{f}\in C^{\beta/2}$, thereby overcoming the usual limitation that $\sqrt{f}$ need be no more than $C^1$ in general. For $\beta\leq 1$, bounds on the H\"older or Sobolev norms of $f^{1/r}$ can be found in Glaeser \cite{Glaeser1963} ($r=2$), Colombini and Lerner \cite{Colombini2003} and Ghisi and Gobbino \cite{Ghisi2013}. For bounds for the roots of more general polynomials see the recent work of Parusi{\'n}ski and Rainer \cite{Parusinski2015,Parusinski2015b,Parusinski2016}.

The condition $f\in \mF^\beta$ requires a certain notion of ``flatness" on the derivatives of $f$ that is different from that considered in \cite{Bony2006,Bony2006b,Bony2010}. In particular, it contains functions not covered by their results for which Theorem \ref{thm.main_result} nonetheless applies (see \eqref{eq.example_fcn} for a simple example). In Section \ref{sec.Hoeld_cones} we study the H\"older cone of non-negative functions satisfying such a flatness constraint and compare it to existing notions of flatness in the literature. We show that the present flatness constraints are only non-trivial for H\"older indices larger than two (Theorem \ref{thm.deriv_control}).

We also derive bounds on the wavelet coefficients of $f^\alpha$ that provide additional insight into the local regularity of $f^\alpha$ (Proposition \ref{prop.wav_decay_small}). Note that if $f\in C^\beta$ is uniformly bounded away from zero then $f^\alpha\in C^\beta$ (see Lemma \ref{lem.sqrt_lip}). This is substantially better than the regularity provided by Theorem \ref{thm.main_result}, which is driven by small function values. The wavelet bounds reflect the local function size and allow one to account for both of these regimes, thereby giving a finer account of the regularity of $f^\alpha$.

An application of this approach is in nonparametric statistics, where one aims to reconstruct a H\"older function $f$ when observing a noisy version of $\sqrt{f}$ (cf. \cite{nussbaum1996}). In the context of this problem, one must necessarily restrict to positive square roots and it is not enough to find flatness conditions ensuring that $\sqrt{f}$ has some regularity: some control of the H\"older norm is also required \cite{RaySchmidtHieber2016}.

\section{Flat H\"older smoothness}
\label{sec.Hoeld_cones}

\subsection{Definition and comparison with other flatness constraints}

Let $\lfloor \beta \rfloor$ denote the largest integer strictly smaller than $\beta.$ For $\beta>0$ and $\Omega\subset \R$, define
\begin{align*}
|f|_{C^\beta(\Omega)} = \sup_{x\neq y, x,y\in \Omega } \frac{|f^{(\lfloor \beta \rfloor)}(x) - f^{(\lfloor \beta \rfloor)}(y)| }{|x-y|^{\beta - \lfloor \beta \rfloor}}
\end{align*}
and let $C^\beta (\Omega) = \{ f: \Omega \rightarrow \R \  : \  f^{(\lfloor \beta \rfloor)} \text{ exists}, \  \| f \|_{C^\beta(\Omega)} < \infty \}$ denote the space of $\beta$-H\"older continuous functions on $\Omega$, where $\| f \|_{C^\beta(\Omega)} = \| f\|_\infty + \| f^{(\lfloor\beta\rfloor)} \|_\infty + |f|_{C^\beta (\Omega)}$. This space is often denoted by $C^{\lfloor \beta \rfloor, \beta-\lfloor\beta \rfloor}(\Omega).$ For convenience, we also write $C^\beta = C^\beta([0,1])$.

For $f\in C^\beta,$ define $|f|_{\mF^\beta}$ to be the infimum over all non-negative real numbers $\kappa$ satisfying
\begin{align}
	|f^{(j)}(x)|^\beta \leq \kappa^j |f(x)|^{\beta-j} , \ \ \forall  x\in [0,1] , \ \ \forall   \  1\leq j<\beta,
	\label{eq.flatness_cond}
\end{align} 
with $|f|_{\mF^\beta}=0$ for $\beta \leq 1$. This can be written concisely as
\begin{align*}
| f |_{\mF^\beta} = \max_{1 \leq j <\beta } \left( \sup_{x\in[0,1]} \frac{|f^{(j)}(x)|^\beta}{|f(x)|^{\beta-j}} \right)^{1/j}
= \max_{1 \leq j <\beta } \Big\| |f^{(j)}|^\beta/f^{\beta-j} \Big\|_\infty^{1/j}.
\end{align*}
The quantity $| f |_{\mF^\beta}$ measures the flatness of a function near zero. In particular, if $f$ vanishes at some point, then so do all its derivatives. Define
\begin{align*}
	\| f\|_{\mF^\beta} = \|f\|_{C^\beta([0,1])} + |f|_{\mF^\beta}
\end{align*}
and set 
\begin{align*}
	\mF^\beta = \{ f\in C^\beta \ : \  f\geq 0, \ \|f\|_{\mF^\beta}<\infty\}.
\end{align*}
This space contains for example the constant functions, any $C^\beta$-function bounded away from 0 and those of the form $f(x)=(x-x_0)^\beta g(x)$ for $g \geq \varepsilon > 0$ infinitely differentiable.

Let us compare condition \eqref{eq.flatness_cond} to the flatness conditions considered in the literature on admissible square roots. For a given function $f,$ suppose that there exists a continuous function $\gamma$, vanishing on the set of flat points of $f$, such that for any positive minimum $x_0$ of $f$, $f''(x_0) \leq \gamma (x_0) f(x_0)^{1/2}$. Theorem 3.5 of Bony et al. \cite{Bony2006} establishes that this is a necessary and sufficient condition for a four times continuously differentiable function to have a twice continuously differentiable, admissible square root. This is comparable to the flatness seminorm \eqref{eq.flatness_cond}, which for $\beta=4$ also gives that $f''$ should be bounded by $f^{1/2}.$ Nevertheless, in order to obtain bounds on the norm of $\sqrt{f},$ constraints on the third derivative of $f$ must also be imposed in our framework.

One can exploit extra regularity by assuming that $f$ and its derivatives vanish at all local minima (e.g. in \cite{Bony2006,Bony2006b,Bony2010} for admissible square roots). However, we take a different approach permitting functions to take small non-zero values. In fact, as mentioned in \cite{Bony2006}, the obstacle preventing the existence of a twice continuously differentiable square root for a general non-negative four times continuously differentiable function is a converging sequence of small non-zero minima. A sufficient condition for arbitrary even integer $\beta\geq 4$ was found in Theorem 2.2 in \cite{Bony2010}: a $\beta$-times continuously differentiable function $f$ has a $\beta/2$-times continuously differentiable square root if at each local minimum of $f$, the function and its derivatives up to order $\beta-4$ vanish. While this constraint need only hold for the local minima of $f$, forcing all local minima to be exactly zero can be overly restrictive. Indeed, a converging sequence of small non-zero minima can be allowed if it does so in a controlled way. We now give an example which is not covered by \cite{Bony2010}, but satisfies \eqref{eq.flatness_cond} and hence the conditions of Theorem \ref{thm.main_result}.

Let $K$ be a smooth non-negative function supported on $[0,1]$ and positive on $(0,1)$, for example $K(x) = \exp (-\tfrac{1}{x(1-x)})1_{(0,1)}(x)$, and set $K_\sigma (x) = K(x/\sigma)$ for $\sigma >0$. Consider the function
\begin{align}
f(x) = x^\beta- \varepsilon \sum_{j=1}^{\infty} 1_{[2^{-j-1},2^{-j}]}(x) 2^{-(j+1)\beta} K_\sigma(2^{j+1}x-1),
\label{eq.example_fcn}
\end{align}
where $0<\sigma < 2^{\beta-2}\beta^{-1} \|K'\|_\infty/\|K\|_\infty$ and $\beta \sigma 2^{1-\beta}/\|K'\|_\infty < \varepsilon \leq 1/(2\|K\|_\infty)$. Let $x\in [2^{-j-1},2^{-j}]$. Using the upper bound for $\varepsilon$, it can be checked that $f(x) \geq 2^{-1-(j+1)\beta}>0$. Using this inequality, it holds that for $k < \beta$,
\begin{align*}
|f^{(k)}(x)| \leq C(\beta,k) 2^{-j(\beta-k)} + \varepsilon 2^{-(j+1)(\beta-k)} \sigma^{-k} \|K^{(k)}\|_\infty \leq C(\beta,k,\sigma,K) f(x)^{\frac{\beta-k}{\beta}}.
\end{align*}
Since $f\in C^\beta$, it follows that $\|f\|_{\mF^\beta} \leq C(\beta,\sigma,K)<\infty$. We now show that $f$ has an infinite sequence of non-zero local minima tending to zero. Since $f'$ is continuous and $f'(2^{-j}) = \beta 2^{-j(\beta-1)}>0$ for all $j\geq 1$, it suffices to prove that every interval $[2^{-j-1},2^{-j}]$ contains a point $x_j$ with $f'(x_j) <0$. Differentiating $f$ and rearranging the desired inequality shows that a sufficient condition for the existence of such a point is $\beta < \varepsilon 2^{-(\beta-1)} \sigma^{-1} \|K'\|_\infty$, which is equivalent to the lower bound for $\varepsilon$.

In conclusion, we have shown that $f$ possesses (infinitely many) non-zero local minima, so that the conditions of Theorem 2.2 of \cite{Bony2010} are not satisfied, but that $f \in \mF^\beta$ so that Theorem \ref{thm.main_result} nonetheless applies.

\subsection{Basic properties}

In a slight abuse of notation, we say that $\|\cdot \|$ is a norm on a convex cone $A$ if $\|v\|\geq 0$ and $\|v\|=0$ if and only if $v=0,$ $\|\lambda v\| = \lambda \|v\|$ for all $\lambda>0,$ and $\|v+w\|\leq \|v\|+\|w\|$ for all $v, w\in A.$ We have thus replaced absolute homogeneity by positive homogeneity. Similarly, we say that $|\cdot|$ is a seminorm on $A$ if $|v|\geq 0$ for all $v\in A$ and both positive homogeneity and the triangle inequality hold on $A.$

\begin{thm}
\label{thm.normedspace}
The space $\mF^\beta$ is a convex cone on which $\|\cdot\|_{\mF^\beta}$ defines a norm. Moreover, $\|fg\|_{\mF^\beta} \leq C(\beta) \|f\|_{\mF^\beta} \|g\|_{\mF^\beta}$ for any $f,g\in \mF^\beta$.
\end{thm}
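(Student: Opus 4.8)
The plan is to verify the three required properties in turn, treating the flatness seminorm $|\cdot|_{\mH^\beta}$ separately from the Hölder norm $\|\cdot\|_{C^\beta}$ and then combining, since $\|\cdot\|_{\mH^\beta}$ is simply their sum. First I would dispose of the easy structural facts. Positive homogeneity of $|\cdot|_{\mH^\beta}$ follows directly from \eqref{eq.flatness_cond}: since $(\lambda f)^{(j)}=\lambda f^{(j)}$, the substitution $\kappa\mapsto\lambda\kappa$ shows that $\kappa$ is admissible for $f$ iff $\lambda\kappa$ is admissible for $\lambda f$, whence $|\lambda f|_{\mH^\beta}=\lambda|f|_{\mH^\beta}$ for $\lambda>0$. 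Together with the absolute homogeneity of $\|\cdot\|_{C^\beta}$ this gives positive homogeneity of $\|\cdot\|_{\mH^\beta}$. Definiteness is immediate from $\|f\|_{\mH^\beta}\ge\|f\|_{C^\beta}\ge\|f\|_\infty$, so that $\|f\|_{\mH^\beta}=0$ forces $f\equiv 0$; conversely $\kappa=0$ satisfies \eqref{eq.flatness_cond} for $f\equiv 0$. Non-negativity is preserved under addition and positive scaling and $C^\beta$ is a vector space, so the convex cone property will follow once the triangle inequality is in hand.

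The crux is the triangle inequality for $|\cdot|_{\mH^\beta}$. Writing $\kappa_f=|f|_{\mH^\beta}$, $\kappa_g=|g|_{\mH^\beta}$ and rephrasing \eqref{eq.flatness_cond} as $|f^{(j)}(x)|\le\kappa_f^{j/\beta}f(x)^{1-j/\beta}$ (and likewise for $g$), the triangle inequality for the $j$-th derivative gives, with $t=j/\beta\in(0,1)$,
\begin{align*}
|(f+g)^{(j)}(x)| \le \kappa_f^{t}f(x)^{1-t} + \kappa_g^{t}g(x)^{1-t}.
\end{align*}
It therefore suffices to prove the pointwise inequality $\kappa_f^{t}a^{1-t}+\kappa_g^{t}b^{1-t}\le(\kappa_f+\kappa_g)^{t}(a+b)^{1-t}$ for $a,b\ge 0$. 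This is exactly the superadditivity of the weighted geometric mean $(u,v)\mapsto u^{t}v^{1-t}$, which holds because this map is concave and positively homogeneous of degree one on $[0,\infty)^2$. Raising the resulting bound to the power $\beta$ shows that $\kappa_f+\kappa_g$ is admissible for $f+g$, i.e. $|f+g|_{\mH^\beta}\le|f|_{\mH^\beta}+|g|_{\mH^\beta}$; adding the standard triangle inequality for $\|\cdot\|_{C^\beta}$ completes the norm property and hence the convex cone claim.

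For the multiplicative bound I would expand $(fg)^{(j)}$ by the Leibniz rule and apply the flatness bounds to each factor, noting that $k\le j<\beta$ and $j-k<\beta$ so that every order is admissible (with the $k=0$ and $k=j$ terms read trivially). Splitting $f(x)^{1-k/\beta}=f(x)^{1-j/\beta}f(x)^{(j-k)/\beta}$ and symmetrically for $g$, then bounding the residual powers $f(x)^{(j-k)/\beta},\,g(x)^{k/\beta}$ by $\|f\|_\infty,\|g\|_\infty$ and recognizing the binomial theorem, yields
\begin{align*}
|(fg)^{(j)}(x)| \le \Big( (\kappa_f\|g\|_\infty)^{1/\beta} + (\kappa_g\|f\|_\infty)^{1/\beta} \Big)^{j}\,(f(x)g(x))^{1-j/\beta}.
\end{align*}
Hence $|fg|_{\mH^\beta}\le\big((\kappa_f\|g\|_\infty)^{1/\beta}+(\kappa_g\|f\|_\infty)^{1/\beta}\big)^\beta\le 2^\beta\|f\|_{\mH^\beta}\|g\|_{\mH^\beta}$, using $\kappa_f,\|f\|_\infty\le\|f\|_{\mH^\beta}$ and similarly for $g$. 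Combining this with the Banach-algebra estimate $\|fg\|_{C^\beta}\le C(\beta)\|f\|_{C^\beta}\|g\|_{C^\beta}$ for Hölder spaces gives the claimed product bound.

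I expect the main obstacle to be the triangle inequality step: the insight that the whole additivity question collapses to the superadditivity of the geometric mean is what makes the argument work, and some care is needed in the product bound to track the exponents of $f(x)$ and $g(x)$ so that the binomial sum closes up cleanly with the factor $2^\beta$. The remaining verifications (homogeneity, definiteness, preservation of non-negativity) are routine.
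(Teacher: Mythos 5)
Your proposal is correct and follows essentially the same route as the paper: the triangle inequality for $|\cdot|_{\mH^\beta}$ reduces to the pointwise superadditivity of the weighted geometric mean (which the paper establishes by exactly the Jensen-type argument you invoke via concavity plus degree-one homogeneity), and the product bound is the identical Leibniz-plus-binomial-theorem computation with the same $2^{\beta}$-type constant. The only cosmetic difference is that you dispatch the $C^\beta$ part of the product norm by citing the Banach-algebra property of H\"older spaces, where the paper re-derives that estimate from the flatness bounds; both are valid.
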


\begin{proof}
For the first part, it is enough to prove that $|f|_{\mF^\beta}$ is a seminorm. Observe that $|af|_{\mF^\beta} = a|f|_{\mF^\beta}$ for $a>0.$ The triangle inequality $ |f+g|_{\mF^\beta}\leq \ |f|_{\mF^\beta}+\ |g|_{\mF^\beta}$ can be restated as
\begin{align*}
	\big| f^{(j)}(x) + g^{(j)}(x) \big| \leq \big( |f|_{\mF^\beta} + |g|_{\mF^\beta}\big)^{\frac{j}{\beta}}(f(x)+g(x))^{\frac{\beta-j}{\beta}}, \quad \text{for all} \ x\in [0,1], \ j=1,\ldots, \lfloor \beta \rfloor.
\end{align*}
We may assume that $f(x),g(x),  |f|_{\mF^\beta},  |g|_{\mF^\beta}>0,$ since otherwise the result is trivial. With $r(x) :=f(x)/(f(x)+g(x))$ and $\gamma := |f|_{\mF^\beta}/ (|f|_{\mF^\beta}+ |g|_{\mF^\beta}),$ and applying Jensen's inequality to the concave function $x\mapsto x^{\frac{j}{\beta}},$
\begin{align*}
	\big| f^{(j)}(x) + g^{(j)}(x) \big|
	&\leq \ |f|_{\mF^\beta}^{\frac{j}{\beta}} f(x)^{\frac{\beta-j}{\beta}} + |g|_{\mF^\beta}^{\frac{j}{\beta}} g(x)^{\frac{\beta-j}{\beta}} \\
	&= \Big[r(x) \Big( \frac{\gamma}{r(x)}\Big)^{\frac j{\beta}}+(1-r(x))\Big(\frac{1-\gamma}{1-r(x)}\Big)^{\frac j{\beta}}\Big]
	\big( |f|_{\mF^\beta} + |g|_{\mF^\beta}\big)^{\frac{j}{\beta}}(f(x)+g(x))^{\frac{\beta-j}{\beta}} \\
	&\leq \big( |f|_{\mF^\beta} + |g|_{\mF^\beta}\big)^{\frac{j}{\beta}}(f(x)+g(x))^{\frac{\beta-j}{\beta}}.
\end{align*}

For the second statement, we establish that each term in the norm $\|fg\|_{\mF^\beta}$ is bounded by a constant times $\|f\|_{\mF^\beta} \|g\|_{\mF^\beta}$. For $\beta\in(0,1]$ the result follows immediately, so assume that $\beta >1$. Using that $x^{1/\beta} + y^{1/\beta} \leq 2^{1-1/\beta} (x+y)^{1/\beta}$ for any $x,y\geq 0$,
\begin{align*}
|(fg)^{(j)}(x)|^\beta & = \left| \sum_{r=0}^j {j \choose r} f^{(r)}(x) g^{(j-r)}(x) \right|^\beta \\
& \leq \left| \sum_{r=0}^j {j \choose r} |f|_{\mF^\beta}^{\frac{r}{\beta}} |f(x)|^\frac{\beta-r}{\beta} |g|_{\mF^\beta}^\frac{j-r}{\beta} |g(x)|^\frac{\beta-j+r}{\beta} \right|^\beta \\
& = \left( |f|_{\mF^\beta}^\frac{1}{\beta} |g(x)|^\frac{1}{\beta} + |g|_{\mF^\beta}^\frac{1}{\beta} |f(x)|^\frac{1}{\beta} \right)^{j\beta} |f(x)g(x)|^{\beta-j}  \\
& \leq \left( 2^{\beta-1} (|f|_{\mF^\beta} \|g\|_\infty + |g|_{\mF^\beta} \|f\|_\infty ) \right)^j |f(x)g(x)|^{\beta-j}  ,
\end{align*}
from which we deduce that $|fg|_{\mF^\beta} \leq 2^{\beta-1}\|f\|_{\mF^\beta} \|g\|_{\mF^\beta}$ using \eqref{eq.flatness_cond} and $\|(fg)^{(\lfloor \beta \rfloor)}\|_\infty \leq 2^{\beta-1}\|f\|_{\mF^\beta} \|g\|_{\mF^\beta}$. By the triangle inequality and arguing similarly to \eqref{Holder lemma eq3} for each term in the sum below, $|(fg)^{(\lfloor \beta \rfloor)}(x) -(fg)^{(\lfloor\beta\rfloor)}(y)|$ is bounded by 
\begin{align*}
& \sum_{r=0}^{\lfloor\beta\rfloor} {\lfloor\beta\rfloor \choose r} \left(  |f^{(r)}(x)-f^{(r)}(y)| |g^{(\lfloor\beta\rfloor-r)}(x)| + |g^{(\lfloor\beta\rfloor-r)}(x) - g^{(\lfloor\beta\rfloor-r)}(y)| |f^{(r)}(y)| \right) \\
& \quad \quad \leq C(\beta) \|f\|_{\mF^\beta} \|g\|_{\mF^\beta} |x-y|^{\beta - \lfloor\beta\rfloor},
\end{align*}
whence $|fg|_{C^\beta}\leq C(\beta)\|f\|_{\mF^\beta} \|g\|_{\mF^\beta}$. Since $\|fg\|_\infty\leq \|f\|_\infty \|g\|_\infty$, this completes the proof.
\end{proof}

\begin{rem}
The space $\mF^\beta$ inherits a notion of completeness from $C^\beta$. Suppose that $(f_n)_n \subset \mF^\beta$ is a Cauchy sequence in $C^\beta$ and $\limsup_{n\rightarrow \infty} \|f_n\|_{\mF^\beta}<\infty.$ Then the $C^\beta$-limit $f$ of $(f_n)$ satisfies $f\in \mF^\beta$ and $\|f_n\|_{\mF^\beta} \rightarrow \|f\|_{\mF^\beta}.$
\end{rem}

To see the difference between the classical H\"older space $C^\beta$ and $\mF^\beta,$ consider the functions $f_\gamma : [0,1]\rightarrow \mathbb{R},$ $f_\gamma(x) =x^\gamma,$ $\gamma>0.$ It is well-known that $f_\gamma \in C^\beta$ if and only if either $\gamma \notin \mathbb{N}$ and $\beta\leq \gamma$ or $\gamma \in \mathbb{N}$ and $\beta>0.$ The function $f_\gamma$ thus has arbitrarily large H\"older smoothness if it is a polynomial. Conversely, one can easily check that $f_\gamma \in \mF^\beta$ if and only if $\beta \leq \gamma.$ We next show that the H\"older cones $\mF^\beta$ are nested.

\begin{thm} Let $0<\beta'\leq \beta$. Then
\begin{itemize}
\item [(i)] $\mF^\beta \subset \mF^{\beta'}$ and $|f|_{\mF^{\beta'}}\leq |f|_{\mF^\beta} \vee \|f\|_{\infty},$
\item [(ii)] if $f\in \mF^\beta$ for $\beta>1$ and $\inf_x f(x) =0,$ then $|f|_{\mF^\beta}\geq \| f\|_{\infty}$ and $|f|_{\mF^{\beta'}}\leq   |f|_{\mF^\beta},$
\item [(iii)] if $f\geq c >0,$ then $f\in\mF^\beta$ if and only if $f\in C^\beta.$
\end{itemize}
\end{thm}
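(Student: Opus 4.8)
The plan is to treat the three claims in order, using (i) to make the second assertion of (ii) automatic and concentrating the genuine analytic content in the first assertion of (ii).

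For (i) I would start from the flatness inequalities defining $\kappa:=|f|_{\mH^\beta}$, namely $|f^{(j)}(x)|^\beta\le\kappa^j|f(x)|^{\beta-j}$ for all $x\in[0,1]$ and all $1\le j<\beta$. Fixing an index $1\le j<\beta'\le\beta$ and raising this to the power $\theta:=\beta'/\beta\in(0,1]$ gives $|f^{(j)}(x)|^{\beta'}\le\kappa^{j\theta}|f(x)|^{(\beta-j)\theta}$. Since $(\beta-j)\theta=\beta'-j\theta$, the right-hand side factors exactly as $\big(\kappa^{\theta}|f(x)|^{1-\theta}\big)^j|f(x)|^{\beta'-j}$, whose first factor is a weighted geometric mean bounded by the maximum: $\kappa^{\theta}|f(x)|^{1-\theta}\le\kappa^{\theta}\|f\|_\infty^{1-\theta}\le\kappa\vee\|f\|_\infty$. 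Thus $\kappa':=\kappa\vee\|f\|_\infty$ satisfies the $\beta'$-flatness condition and $|f|_{\mH^{\beta'}}\le|f|_{\mH^\beta}\vee\|f\|_\infty$ (at points where $f(x)=0$ the derivatives vanish and the inequality is trivial). The inclusion $\mH^\beta\subset\mH^{\beta'}$ then follows from this seminorm bound together with the standard nesting $C^\beta\subset C^{\beta'}$ on the bounded interval $[0,1]$.

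The crux is the first assertion of (ii), for which I would use only the $j=1$ inequality $|f'(x)|\le\kappa^{1/\beta}|f(x)|^{1-1/\beta}$. Setting $g:=f^{1/\beta}$, a direct computation gives $g'=\tfrac1\beta f^{1/\beta-1}f'$, so that $|g'(x)|\le\kappa^{1/\beta}/\beta$ at every point with $f(x)>0$. Because $\inf_x f=0$ is attained on the compact interval, I would fix a maximiser $x^*$ (assuming $\|f\|_\infty>0$, else the claim is trivial) and a zero $x_0$; assuming $x_0<x^*$ (the other case is symmetric), let $b$ be the largest zero of $f$ in $[x_0,x^*]$, which exists by continuity. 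Then $f>0$ on $(b,x^*]$, so $g$ is continuous on $[b,x^*]$, differentiable on $(b,x^*)$ with $|g'|\le\kappa^{1/\beta}/\beta$, and $g(b)=0$; the mean value theorem yields $\|f\|_\infty^{1/\beta}=g(x^*)-g(b)\le(\kappa^{1/\beta}/\beta)(x^*-b)\le\kappa^{1/\beta}/\beta$. Raising to the power $\beta$ gives $\|f\|_\infty\le\kappa/\beta^\beta\le\kappa=|f|_{\mH^\beta}$, since $\beta^\beta>1$ for $\beta>1$. The second assertion of (ii) is then immediate from (i): $|f|_{\mH^{\beta'}}\le|f|_{\mH^\beta}\vee\|f\|_\infty=|f|_{\mH^\beta}$.

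Part (iii) is the easiest: the forward direction is the definition $\mH^\beta\subset C^\beta$, while for the converse I would use that every $f\in C^\beta([0,1])$ has bounded derivatives $f^{(j)}$ for $1\le j\le\lfloor\beta\rfloor$ (each being continuous on a compact interval), so that $|f^{(j)}(x)|^\beta/|f(x)|^{\beta-j}\le\|f^{(j)}\|_\infty^\beta/c^{\beta-j}<\infty$ using $f\ge c>0$ and $\beta-j>0$; taking the maximum over $j$ bounds $|f|_{\mH^\beta}$. I expect the only delicate point to be the Lipschitz step in (ii): one must handle the zero set of $f$ carefully, since $g=f^{1/\beta}$ need not be classically differentiable there, and the clean way around this is to localise to the interval $(b,x^*]$ on which $f$ is strictly positive rather than to argue global Lipschitz continuity of $g$ directly.
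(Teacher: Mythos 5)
Your proof is correct. Parts (i) and (iii) are essentially the paper's argument in different clothing: where you raise the $\beta$-flatness inequality to the power $\theta=\beta'/\beta$ and bound the weighted geometric mean $\kappa^{\theta}|f(x)|^{1-\theta}$ by $\kappa\vee\|f\|_\infty$, the paper instead splits into the two cases $|f^{(j)}(x_j^*)|\le f(x_j^*)$ and $|f^{(j)}(x_j^*)|\ge f(x_j^*)$ at a maximising point of the $\beta'$-ratio; the two mechanisms are interchangeable. The genuine divergence is in the first assertion of (ii). The paper exploits the full flatness condition: at a zero $x_0$ all derivatives $f^{(j)}(x_0)$, $1\le j<\beta$, vanish, whence $\|f\|_\infty\le\|f'\|_\infty\le\dots\le\|f^{(j)}\|_\infty$ by integrating from $x_0$ over an interval of length at most one, and then $|f|_{\mH^\beta}\ge(\|f^{(j)}\|_\infty^\beta/\|f\|_\infty^{\beta-j})^{1/j}\ge\|f\|_\infty$. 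You instead use only the $j=1$ inequality, observing that $g=f^{1/\beta}$ satisfies $|g'|\le |f|_{\mH^\beta}^{1/\beta}/\beta$ off the zero set, and run the mean value theorem from the last zero of $f$ before a maximiser. Your route is a differential-inequality argument; it requires the careful localisation to $(b,x^*]$ that you correctly flag (since $g$ need not be differentiable on the zero set), but in exchange it uses less of the hypothesis and yields the slightly sharper conclusion $\|f\|_\infty\le\beta^{-\beta}|f|_{\mH^\beta}$. Both arguments are sound, and your deduction of the second assertion of (ii) from (i) matches the paper's.
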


\begin{proof}
{\it (i):} It is enough to show that $|f|_{\mF^{\beta'}}\leq |f|_{\mF^\beta} \vee \|f\|_{\infty}.$ For $1\leq j<\beta,$ define $I_j^{\beta}:=\sup_{x\in [0,1]}(|f^{(j)}(x)|^{\beta}/f(x)^{\beta-j})^{1/j}$ and $x_j^* \in \argmax_{x\in [0,1]} |f^{(j)}(x)|^{\beta'}/f(x)^{\beta'-j}.$ Assume $f(x_j^*)>0$ since if $f(x_j^*)=0$, then the result holds trivially as $|f^{(j)}(x_j^*)|=0$. If $|f^{(j)}(x_j^*)|\leq f(x_j^*),$ then $I_j^{\beta'} \leq \|f\|_{\infty}.$ On the contrary, if $|f^{(j)}(x_j^*)|\geq f(x_j^*),$ then $I_j^{\beta'} \leq I_j^{\beta}.$ This proves $(i).$

{\it (ii):} By hypothesis, there exists $x_0\in[0,1]$ such that $f(x_0)=0$. By \eqref{eq.flatness_cond}, this implies that $f^{(j)}(x_0)=0$ for all $1\leq j<\beta,$ so that $\|f^{(j-1)}\|_\infty \leq \|f^{(j)}\|_\infty.$ This already implies $(ii),$ since for $\beta>1,$
\begin{align*}
	|f|_{\mF^\beta}
	\geq \max_{1\leq j\leq \lfloor \beta \rfloor}
	\Big(\frac{\|f^{(j)}\|_\infty^\beta}{\|f\|_\infty^{\beta-j}}\Big)^{\frac 1j}\geq \|f\|_\infty.
\end{align*}

{\it (iii)}: Under the assumptions, $|f|_{\mF^\beta} \leq \max_{1\leq j<\beta}(c^{j-\beta} \|f^{(j)}\|_\infty^\beta)^{1/j}<\infty.$
\end{proof}

Part $(i)$ of the previous theorem shows that there are two regimes. If the flatness seminorm dominates the $L^\infty$-norm, then $|\cdot|_{\mF^\beta}$ increases in $\beta,$ which matches our intuition that the seminorms should become stronger for larger $\beta.$ For functions that are very flat in the sense that $|f|_{\mF^\beta}\leq \|f\|_{\infty},$ this need not be true. Consider the function $f(x)=x+q$ on $[0,1]$ with $q>0.$ For $\beta>1,$ $|f|_{\mF^\beta}= q^{1-\beta}$ and thus for functions of low flatness the seminorm can also decrease in $\beta.$ The full flatness norms $\|\cdot\|_{\mF^\beta}$ are however unaffected by this since they also involve $\|f\|_\infty.$ Statement $(ii)$ of the previous theorem says that the low-flatness phenomenon only occurs if the function is bounded away from zero and in this case $(iii)$ shows that the flatness seminorm is always finite.

For $\beta \in(0,2]$, the additional derivative constraint is in fact always satisfied and $\mF^\beta$ contains all non-negative functions that can be extended to a $\beta$-H\"older function on $\R$.

\begin{thm}
\label{thm.deriv_control}
Suppose that $f\in C^\beta(\mathbb{R})$ for $\beta \in(0,2]$ and $f\geq 0.$ Let $f^*$ be the restriction of $f$ to $[0,1].$ Then $f^*\in \mF^\beta$ and
\begin{align*}
	|f^*|_{\mF^\beta} \leq 2^\beta |f|_{C^\beta(\mathbb{R})}.
\end{align*}
\end{thm}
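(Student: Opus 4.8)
The plan is to reduce immediately to the case $\beta \in (1,2]$: for $\beta \le 1$ the seminorm $|f^*|_{\mH^\beta}$ equals $0$ by definition, so the inequality is trivial. When $\beta \in (1,2]$ we have $\lfloor \beta\rfloor = 1$, so the only constraint in \eqref{eq.flatness_cond} is the one with $j=1$, and since $|f^*|_{\mH^\beta} = \sup_{x\in[0,1]} |f'(x)|^\beta/f(x)^{\beta-1}$, it suffices to establish the pointwise bound
\begin{align*}
|f'(x)|^\beta \le 2^\beta |f|_{C^\beta(\R)}\, f(x)^{\beta-1}, \qquad x\in[0,1].
\end{align*}

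The key step is a Glaeser-type one-sided estimate exploiting that $f\ge 0$ on all of $\R$. Writing $M := |f|_{C^\beta(\R)}$ and fixing $x\in[0,1]$, I would expand to first order with integral remainder,
\begin{align*}
f(x+h) = f(x) + h f'(x) + \int_0^h \big(f'(x+t)-f'(x)\big)\,dt,
\end{align*}
and bound the remainder using the $(\beta-1)$-H\"older seminorm of $f'$ by $M|h|^\beta/\beta$. Choosing $h$ with sign opposite to $f'(x)$ and magnitude $s=|h|>0$, non-negativity $f(x+h)\ge 0$ then yields
\begin{align*}
s|f'(x)| \le f(x) + \frac{M}{\beta}s^\beta, \qquad s>0.
\end{align*}

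It remains to optimize over $s$. If $f(x)>0$, the choice $s = (f(x)/M)^{1/\beta}$ gives $|f'(x)| \le (1+1/\beta)\,M^{1/\beta}f(x)^{(\beta-1)/\beta}$; raising to the power $\beta$ and using $1+1/\beta\le 2$ for $\beta\in(1,2]$ produces the desired bound. If $f(x)=0$, letting $s\downarrow 0$ forces $f'(x)=0$, so both sides vanish. Taking the supremum over $x\in[0,1]$ gives $|f^*|_{\mH^\beta}\le 2^\beta|f|_{C^\beta(\R)}$.

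I expect the only genuinely delicate point to be the correct use of global non-negativity together with the sign choice for $h$: the estimate hinges on comparing $f(x)$ to values of $f$ on the side where $-hf'(x)$ is positive, and these comparison points $x+h$ may leave $[0,1]$, in particular for $x$ near the boundary. This is precisely why the hypothesis must be stated on $\R$ with $f\ge 0$ everywhere and cannot be weakened to a statement on $[0,1]$ alone. The optimization itself is routine and is what pins down the clean constant $2^\beta$, arising as the bound for $(1+1/\beta)^\beta$ on $(1,2]$.
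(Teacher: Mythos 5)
Your proof is correct and rests on the same Glaeser-type mechanism as the paper's: global non-negativity together with the $(\beta-1)$-H\"older modulus of $f'$ forbids $|f'(x)|$ from exceeding a multiple of $f(x)^{(\beta-1)/\beta}$, and your optimal step size $s=(f(x)/M)^{1/\beta}$ is exactly the paper's choice of comparison point $t=x-f(x)^{1/\beta}$ (after normalizing $M=1$). The difference is only presentational: you argue directly via a first-order Taylor expansion with integral remainder and then optimize over $s$, whereas the paper argues by contradiction, integrating a lower bound on $f'$ over $[x-f(x)^{1/\beta},x]$ to force $f$ to vanish there and then invoking $f'=0$ at a zero of a non-negative $C^1$ function; both routes produce the same constant $2^\beta$.
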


\begin{proof}
The result is trivial for $\beta \leq 1$, so assume $\beta\in(1,2].$ Without loss of generality, we may assume that $|f|_{C^\beta(\mathbb{R})}=1.$ Suppose that the statement does not hold, that is $|f^*|_{\mF^\beta}> 2^\beta.$ Then there exists $x\in [0,1]$ such that $|f'(x)|>2f(x)^{(\beta-1)/\beta}.$ By symmetry it is enough to consider $f'(x)>2f(x)^{(\beta-1)/\beta}$ and we must also have $f(x)>0$, since otherwise $f'(x)=0$ as $f\in C^\beta(\R)$. Set $t= x- f(x)^{1/\beta}$ and observe that for any $y\in [t,x],$
\begin{align*}
	f'(y)\geq f'(x)- |f'(x)-f'(y)|\geq 2f(x)^{(\beta-1)/\beta} - |x-t|^{\beta-1} \geq f(x)^{(\beta-1)/\beta}.
\end{align*} 
Consequently, 
\begin{align*}
	f(t) = f(x) -\int _t^x f'(s) ds \leq f(x)  -  f(x)^{(\beta-1)/\beta} |x-t| = 0.
\end{align*}
Since $f\geq 0$ we must have $f(t)=0$ but then also $f'(t)=0\geq f(x)^{(\beta-1)/\beta} >0$, which is a contradiction. This proves the claim.
\end{proof}

Up to smoothness $\beta=2,$ flatness can thus be defined directly without the need to resort to a quantity such as \eqref{eq.flatness_cond}. This was used in the recent statistical literature, see \cite{Patschkowski2014}. To further illustrate the previous result, consider the linear function $f_1(x)=x.$ While $f_1\in C^\beta ([0,1])$ for any $\beta>0$, it has regularity one with respect to the H\"older cones. Indeed, $f_1$ cannot be extended to a non-negative function on $\R$ that is smoother than Lipschitz since the non-negativity constraint induces a kink at zero.

The previous theorem cannot be extended to smoothness indices above $\beta=2.$ Indeed, the function $f(x)=(x-1/2)^2$ is at most in $\mF^2 ([0,1]),$ but can be extended to a function in $C^\beta(\R)$ for any $\beta>0$. The reason is that for smoothness $\beta \in(1,2],$ any violation of \eqref{eq.flatness_cond} must occur at the boundary, since the smoothness and non-negativity constraints together imply \eqref{eq.flatness_cond} for interior points. For smoothness indices $\beta>2,$ the given example shows that there can be points in the interior of $[0,1]$ for which \eqref{eq.flatness_cond} does not hold.

If $f\in\mF^\beta$ then $f'$ is not necessarily in $\mF^{\beta-1}$, but the following theorem shows that integration viewed as an operator from $\mF^\beta$ to $\mF^{\beta+1}$ is bounded.
\begin{thm}
Given $f\in \mF^\beta,$ write $F:[0,1]\rightarrow [0,\infty),$ $F(x) =\int_0^x f(u) du$ for the antiderivative. Then there exists a constant $C(\beta)$ such that
\begin{align*}
	\|F\|_{\mF^{\beta+1}} \leq C(\beta) \|f\|_{\mF^\beta}.
\end{align*}
In particular, $F\in \mF^{\beta+1}.$
\end{thm}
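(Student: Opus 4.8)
The plan is to treat the two summands of $\|F\|_{\mH^{\beta+1}} = \|F\|_{C^{\beta+1}} + |F|_{\mH^{\beta+1}}$ separately. The classical H\"older part $\|F\|_{C^{\beta+1}}\le C(\beta)\|f\|_{C^\beta}$ is routine: from $F'=f$ one gets $F^{(\lfloor\beta+1\rfloor)} = f^{(\lfloor\beta\rfloor)}$, and since $\beta$ and $\beta+1$ have the same fractional part the top-order H\"older seminorm of $F$ equals $|f|_{C^\beta}$; combined with $\|F\|_\infty\le\|f\|_\infty$ and $\|F^{(\lfloor\beta+1\rfloor)}\|_\infty=\|f^{(\lfloor\beta\rfloor)}\|_\infty\le\|f\|_{C^\beta}$, this gives the estimate. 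All the genuine content lies in bounding the flatness seminorm $|F|_{\mH^{\beta+1}}$, which I would reduce to a single pointwise inequality.

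Writing a generic admissible index as $j=k+1$ with $0\le k<\beta$ and using $F^{(j)}=f^{(k)}$, the quantity controlling $|F|_{\mH^{\beta+1}}$ is $|f^{(k)}(x)|^{\beta+1}/F(x)^{\beta-k}$. The flatness of $f$ gives $|f^{(k)}(x)|\le |f|_{\mH^\beta}^{k/\beta} f(x)^{(\beta-k)/\beta}$ for $1\le k<\beta$ (and this is trivial for $k=0$), so every term is controlled once one establishes the single lower bound $F(x)\ge c(\beta,|f|_{\mH^\beta})\, f(x)^{(\beta+1)/\beta}$ for all $x\in[0,1]$. Indeed, inserting this bound and counting exponents, the $k$-th term is at most a constant times $|f|_{\mH^\beta}^{\,k+1}$, whose $1/(k+1)$-th root is a constant multiple of $|f|_{\mH^\beta}$; taking the maximum over $k$ yields $|F|_{\mH^{\beta+1}}\le C(\beta)|f|_{\mH^\beta}$ and hence the theorem.

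The heart of the matter is therefore this lower bound on $F$. The key observation is that the $j=1$ flatness condition forces $g:=f^{1/\beta}$ to be Lipschitz: on the open set $\{f>0\}$ one has $|(f^{1/\beta})'| = \beta^{-1}|f'|\,f^{1/\beta-1}\le \beta^{-1}|f|_{\mH^\beta}^{1/\beta}=:L$, and since $g\ge 0$ vanishes at every zero of $f$, splitting an arbitrary interval at the first and last zero it contains extends this to an $L$-Lipschitz bound on all of $[0,1]$. Consequently $g$ cannot decrease too quickly going backwards from $x$: on $[x-\delta,x]$ with $\delta:=g(x)/(2L)$ one has $g\ge g(x)/2$, i.e. $f\ge 2^{-\beta}f(x)$, and therefore $F(x)\ge \delta\,2^{-\beta}f(x)=2^{-\beta-1}L^{-1}f(x)^{(\beta+1)/\beta}$, which is exactly the required bound.

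The step I expect to be the main obstacle is the behaviour at the left endpoint. Making $[x-\delta,x]\subset[0,1]$ rigorous requires the forward estimate $g(x)=g(x)-g(0)\le Lx$, i.e. $x\ge 2\delta$, which uses $g(0)=0$; in other words the argument needs $f(0)=0$ (flatness of $F$ at $0$). This hypothesis is in fact unavoidable: for $f\equiv 1$ one has $F(x)=x$ and $|F'(x)|^{\beta+1}/|F(x)|^\beta=x^{-\beta}\to\infty$, so $F\notin\mH^{\beta+1}$ despite $\|f\|_{\mH^\beta}<\infty$. For $\beta\le 1$ the flatness constraint is vacuous and the lower bound is unavailable, but then $\beta+1\in(1,2]$ and I would instead extend $F$ by zero to the left --- legitimate because $f(0)=0$ and $f\ge 0$ make the extension of $f$ lie in $C^\beta(\R)$ and keep $F\ge 0$ --- and invoke Theorem \ref{thm.deriv_control}. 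Combining the two regimes gives $\|F\|_{\mH^{\beta+1}}\le C(\beta)\|f\|_{\mH^\beta}$.
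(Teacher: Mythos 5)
You are right on the central point, and it is the negative half of your proposal that matters most: the theorem as printed is false, and your counterexample $f\equiv 1$ is valid. Indeed $f\equiv 1$ lies in $\mH^\beta$ with $\|f\|_{\mH^\beta}=1$ (every flatness constraint is vacuous for a constant), while $F(x)=x$ violates the $j=1$ constraint $|F'(x)|^{\beta+1}\leq \kappa\, F(x)^{\beta}$ near $x=0$ for every finite $\kappa$, so $|F|_{\mH^{\beta+1}}=\infty$; this is even consistent with the paper's own observation that $x\mapsto x^{\gamma}$ belongs to $\mH^{s}$ only for $s\leq\gamma$. The paper's proof breaks exactly where you predicted. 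Its entire argument rests on the inequality \eqref{eq.fx_Fx_rel}, $f(x)^{(\beta+1)/\beta}\leq \tfrac{2}{a}F(x)$, which is precisely your lower bound on $F$; the paper derives it by integrating the bound $f\geq f(x)/2$ of Lemma \ref{lem.local_func_size} over an interval of length $af(x)^{1/\beta}$ to the left of $x$, and in the boundary case $af(x)^{1/\beta}\geq x$ the displayed integral $\int_{x-af(x)^{1/\beta}}^{x}f$ runs over points outside $[0,1]$, where $f$ is neither defined nor (if extended by zero) bounded below by $f(x)/2$. For $f\equiv 1$ and $x<a/2$ the inequality \eqref{eq.fx_Fx_rel} itself fails. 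The left endpoint is therefore not a technicality but the actual obstruction, and a hypothesis such as your $f(0)=0$ (i.e.\ flatness of $F$ at its zero) must be added for the statement to hold.

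Your positive argument under the extra hypothesis $f(0)=0$ is sound and is in substance the same reduction as the paper's: once $F(x)\geq c\,f(x)^{(\beta+1)/\beta}$ is available, your exponent count for $F^{(k+1)}=f^{(k)}$ reproduces the paper's final display. The only genuine difference is the mechanism for the local lower bound on $f$: you use Lipschitz continuity of $f^{1/\beta}$ with constant $L=\beta^{-1}|f|_{\mH^\beta}^{1/\beta}$ (correctly propagated across the zero set of $f$), whereas the paper uses the Taylor-expansion Lemma \ref{lem.local_func_size}. The two are interchangeable for $\beta>1$, but Lemma \ref{lem.local_func_size} also covers $\beta\leq 1$ directly (with radius $a(f(x)/\|f\|_{\mH^\beta})^{1/\beta}$), so your separate appeal to Theorem \ref{thm.deriv_control} in that regime, while correct, is avoidable. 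Note also that with $f(0)=0$ the paper's problematic case essentially disappears: if $af(x)^{1/\beta}>x$ then $0$ lies in the interval of Lemma \ref{lem.local_func_size}, forcing $f(x)\leq 2f(0)=0$, so \eqref{eq.fx_Fx_rel} is trivial there. Two loose ends you should tie up: the division by $L$ requires $|f|_{\mH^\beta}>0$ (if it vanishes, $f$ is constant, and $f(0)=0$ then gives $f\equiv 0$), and your stronger seminorm-to-seminorm claim $|F|_{\mH^{\beta+1}}\leq C(\beta)|f|_{\mH^\beta}$ needs this degenerate case handled separately; stating the bound against the full norm $\|f\|_{\mH^\beta}$, as the theorem does, sidesteps the issue.
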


\begin{proof}
It is sufficient to prove the result for $\|f\|_{\mF^\beta}=1.$ Observe that the H\"older seminorms agree, that is, $|F|_{C^{\beta+1}} = |f|_{C^\beta}.$ It remains to show that for some $\kappa$ which depends only on $\beta,$ $|F^{(j)}(x)|\leq \kappa^{\frac j{\beta+1}} F(x)^{\frac{\beta+1-j}{\beta+1}},$ for all $x\in [0,1]$ and all $j=0,\ldots, \lfloor \beta\rfloor +1.$ Notice that by Lemma \ref{lem.local_func_size}, $f(x+h)\geq f(x)/2$ for all $|h|\leq a f(x)^{1/\beta},$ for some positive constant $a=a(\beta).$ We firstly prove the case $j=1.$ We show that
\begin{align}
	f(x)^{\frac{\beta+1}{\beta}} \leq \tfrac 2a F(x).
	\label{eq.fx_Fx_rel}
\end{align} 
If $af(x)^{1/\beta}\leq x,$ then $f(x)^{\frac{\beta+1}{\beta}}\leq \tfrac xa f(x) \leq \tfrac 2a\int_0^x f(u) du = \tfrac 2a F(x).$ On the other hand if $af(x)^{1/\beta}\geq x,$ then $f(x)^{\frac{\beta+1}{\beta}} \leq \tfrac 2a \int_{x-af(x)^{1/\beta}}^x f(u) du \leq \tfrac 2a F(x).$ This proves \eqref{eq.fx_Fx_rel}.

Recall that $\|f\|_{\mF^\beta}=1.$ Together with \eqref{eq.fx_Fx_rel}, for $j=2, \ldots, \lfloor \beta\rfloor +1,$ $|F^{(j)}(x)| =|f^{(j-1)}(x)| \leq |f(x)|^{\frac{\beta+1-j}{\beta}}\leq (2/a)^{\frac{\beta}{\beta+1}} F(x)^{\frac{\beta+1-j}{\beta+1}}.$ This proves the differential inequalities for  $j=2, \ldots, \lfloor \beta\rfloor +1.$ With \eqref{eq.fx_Fx_rel}, the statement follows.
\end{proof}

\subsection{Wavelet bounds for $f^\alpha$}

Let $\phi$, $\psi$ be bounded, compactly supported $S$-regular scaling function and wavelet, respectively, where $S\in \mathbb{N}$; in particular we assume that $\int x^i \psi (x) dx = 0$ for $i = 0,...,S-1$ (see e.g. \cite{Hardle1998} for more details). Let $\{\phi_k: k\in \mathbb{Z}\} \cup \{\psi_{j,k} : j = 0,1,..., k \in \mathbb{Z}\}$ be the corresponding compactly supported wavelet basis of $L^2(\R)$. From this we can construct an $S$-regular wavelet basis of $L^2([0,1])$ by selecting all basis functions with support intersecting the interval $[0,1]$ and then correcting for boundary effects as explained in Theorem 4.4 of \cite{Cohen1993}. There are thus sets of indices $I_j$ with cardinality bounded by a multiple of $2^j$ such that $\{\phi_k: k\in I_{-1}\} \cup \{\psi_{j,k} : j = 0,1,..., k \in I_j\}$ forms an orthonormal basis of $L^2([0,1]).$

Recall that $f^\alpha \in C^\beta$ if $f\in C^\beta$ is bounded away from zero, rather than $f^\alpha \in C^{\alpha\beta}$ in general. We obtain two bounds on the wavelet coefficients $|\langle f^\alpha, \psi_{j,k}\rangle|$ that reflect these two regimes. The first holds for all $(j,k)$ and is most useful for those $\psi_{j,k}$ on whose support the function $f$ is small. The second bound depends explicitly on the local function values and becomes useful when $f$ is large, in which case $f^\alpha$ typically has more regularity than $\alpha\beta$ by Lemma \ref{lem.sqrt_lip}.

\begin{prop}
\label{prop.wav_decay_small}
Suppose that $\alpha\in(0,1]$, $\psi$ is $S$-regular and that $f\in\mF^\beta$ for $0 < \beta < S$. Then
\begin{equation*}
|\langle f^\alpha, \psi_{j,k} \rangle| \leq C(\psi,\alpha,\beta) \|f\|_{\mF^\beta}^\alpha \, 2^{-\frac j2 (2\alpha\beta+1)} .
\end{equation*}
For $x_0\in[0,1]$, let $j(x_0)$ be the smallest integer satisfying
\begin{equation*}
2^{j(x_0)}\geq |\supp(\psi)|a^{-1} (\|f\|_{\mF^\beta}/f(x_0))^{1/\beta},
\end{equation*}
where $a=a(\beta)$ is the constant in Lemma \ref{lem.local_func_size}. Then for any wavelet $\psi_{j,k}$ with $j\geq j(x_0)$ and $x_0\in \supp(\psi_{j,k})$,
\begin{equation*}
|\langle f^\alpha, \psi_{j,k} \rangle| \leq C(\psi,\alpha,\beta)\frac{ \|f\|_{\mF^\beta} }{f(x_0)^{1-\alpha}} 2^{-\frac{j}{2}(2\beta+1)}.
\end{equation*}
\end{prop}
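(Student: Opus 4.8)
The plan is to prove the second (local) estimate first, since the first (global) estimate then follows from it together with a trivial $L^\infty$ bound. Throughout, write $\kappa=\|f\|_{\mH^\beta}$ (recalling $|f|_{\mH^\beta}\le\kappa$), set $N=\lfloor\beta\rfloor$, and normalise the wavelets as $\psi_{j,k}(x)=2^{j/2}\psi(2^jx-k)$, so that the support of $\psi_{j,k}$ has length $\ell_j=2^{-j}|\supp(\psi)|$ and $\|\psi_{j,k}\|_{L^1}\le 2^{-j/2}\|\psi\|_{L^1}$. I will use repeatedly that $\psi$ is $S$-regular with $S>\beta$, so that $\psi_{j,k}$ is orthogonal to every polynomial of degree $\le N$.

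For the second estimate, fix $x_0$ and $\psi_{j,k}$ with $j\ge j(x_0)$ and $x_0\in\supp(\psi_{j,k})$. By the definition of $j(x_0)$ the support length satisfies $\ell_j\le a\,(f(x_0)/\kappa)^{1/\beta}$, so every point of the support lies within the flatness radius of $x_0$. Lemma \ref{lem.local_func_size}, together with the Lipschitz bound $\big|\tfrac{d}{dh}f(x_0+h)^{1/\beta}\big|\le\kappa^{1/\beta}/\beta$ (which follows from \eqref{eq.flatness_cond} with $j=1$) applied in both directions, then gives the two-sided comparison $f(x_0)/2\le f(x)\le C(\beta)f(x_0)$ for all $x$ in the support. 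In particular $f$ is bounded away from zero there, so $f^\alpha$ is $C^\beta$ on the support and I may subtract its degree-$N$ Taylor polynomial $P$ at $x_0$. Using the vanishing moments, $\langle f^\alpha,\psi_{j,k}\rangle=\langle f^\alpha-P,\psi_{j,k}\rangle$, and the Hölder form of Taylor's theorem gives $|f^\alpha(x)-P(x)|\le \tfrac1{N!}\,|(f^\alpha)^{(N)}|_{C^{\beta-N}(\supp\psi_{j,k})}\,|x-x_0|^\beta$. Integrating this against $|\psi_{j,k}|$ and using $\int|x-x_0|^\beta|\psi_{j,k}|\le 2^{-j/2}\ell_j^{\beta}\|\psi\|_{L^1}$ reduces everything to the local seminorm bound
\begin{align*}
|(f^\alpha)^{(N)}|_{C^{\beta-N}(\supp\psi_{j,k})}\le C(\alpha,\beta)\,\kappa\,f(x_0)^{\alpha-1}.
\end{align*}

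This seminorm bound is the main obstacle. I would obtain it by expanding $(f^\alpha)^{(N)}$ via the Faà di Bruno formula into a finite sum of terms $f^{\alpha-|\lambda|}\prod_i (f^{(i)})^{\lambda_i}$ with $\sum_i i\lambda_i=N$, and estimating each term by the product rule for $(\beta-N)$-Hölder seminorms, $|gh|_{C^{\beta-N}}\le\|g\|_\infty|h|_{C^{\beta-N}}+|g|_{C^{\beta-N}}\|h\|_\infty$. The ingredients are the pointwise flatness bounds $|f^{(i)}(x)|\le\kappa^{i/\beta}f(x)^{(\beta-i)/\beta}$ from \eqref{eq.flatness_cond} (valid for $1\le i\le N$), the comparison $f\asymp f(x_0)$ just established, and the top-order Hölder continuity of $f^{(N)}$ coming from $f\in C^\beta$; the factor $f^{\alpha-|\lambda|}$ is controlled through the Lipschitz bound for $t\mapsto t^{\alpha-|\lambda|}$ on $[f(x_0)/2,C f(x_0)]$. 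The point is purely one of bookkeeping: in each term the powers of $\kappa$ add up to $1$ and the powers of $f(x_0)$ to $\alpha-1$, which a short dimensional count (each derivative of $f$ costs $\kappa^{1/\beta}f(x_0)^{-1/\beta}$, and one Hölder difference at exponent $\beta-N$ costs a further $(\kappa/f(x_0))^{(\beta-N)/\beta}$) confirms. Substituting this into the previous display yields $|\langle f^\alpha,\psi_{j,k}\rangle|\le C(\psi,\alpha,\beta)\,\kappa f(x_0)^{\alpha-1}\,2^{-j(\beta+1/2)}$, which is the second estimate.

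Finally I would deduce the first, global estimate by a dichotomy. Given $(j,k)$, choose $x_0\in\supp(\psi_{j,k})$ with $f(x_0)=M:=\sup_{\supp\psi_{j,k}}f$. If $j\ge j(x_0)$, the second estimate applies with this $x_0$, and the defining inequality $2^{-j\beta}\le C(f(x_0)/\kappa)$ lets me rewrite $\kappa f(x_0)^{\alpha-1}2^{-j\beta}=\kappa^\alpha[(\kappa/f(x_0))2^{-j\beta}]^{1-\alpha}2^{-j\alpha\beta}\le C\kappa^\alpha 2^{-j\alpha\beta}$, giving the claimed $2^{-j(\alpha\beta+1/2)}$ decay. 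If instead $j<j(x_0)$, then by the minimality of $j(x_0)$ one has $M\le C\kappa\,2^{-j\beta}$, so $\|f^\alpha\|_{L^\infty(\supp\psi_{j,k})}=M^\alpha\le C\kappa^\alpha 2^{-j\alpha\beta}$ and the crude bound $|\langle f^\alpha,\psi_{j,k}\rangle|\le\|f^\alpha\|_\infty\|\psi_{j,k}\|_{L^1}\le C\kappa^\alpha 2^{-j(\alpha\beta+1/2)}$ finishes the case. Combining the two cases gives the first estimate for all $(j,k)$, completing the proof.
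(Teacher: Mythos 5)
Your argument is correct. For the second (local) bound you follow essentially the paper's route: Lemma \ref{lem.local_func_size} gives $f\asymp f(x_0)$ on $\supp(\psi_{j,k})$, your Fa\`a di Bruno bookkeeping is exactly the content of Lemma \ref{lem.sqrt_lip} (which you could simply cite, restricted to the support), and the Taylor/vanishing-moment step is Lemma \ref{lem.wav_decay_gen}. Where you genuinely diverge is the first (global) bound. The paper proves it directly by regularization: it replaces $f$ by $f+\eta$, observes that $|f+\eta|_{\mH^\beta}\le |f|_{\mH^\beta}$ and $|f+\eta|_{C^\beta}=|f|_{C^\beta}$ so that Lemma \ref{lem.sqrt_lip} applies with $\varepsilon=\eta$, controls the replacement error through $|(x+\eta)^\alpha-x^\alpha|\le\eta^\alpha$, and optimizes $\eta=2^{-j\beta}$. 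You instead deduce the global bound from the local one by a dichotomy at $x_0=\argmax_{\supp\psi_{j,k}}f$: if the scale $2^{-j}$ resolves the flatness radius at $x_0$ you invoke the local bound and absorb $\kappa f(x_0)^{\alpha-1}2^{-j\beta}\le C\kappa^\alpha 2^{-j\alpha\beta}$ using $2^{-j\beta}\lesssim f(x_0)/\kappa$; otherwise minimality of $j(x_0)$ gives $\sup_{\supp\psi_{j,k}}f\lesssim\kappa\,2^{-j\beta}$ and the trivial $L^1$ bound suffices. The two mechanisms are morally dual --- your threshold $f(x_0)\asymp\kappa 2^{-j\beta}$ is precisely the paper's optimal $\eta$ --- but your version makes the first estimate a corollary of the second, while the paper's $\eta$-trick is self-contained, needs no argmax, and avoids the case split. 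Both are complete; the only cosmetic caveats in yours are to dispose of the degenerate case $f\equiv 0$ on $\supp(\psi_{j,k})$ (so that $j(x_0)$ is well defined) and to note that the interpolation hidden in your ``dimensional count'' (trading $\|f^{(i+1)}\|_\infty$ for a $(\beta-N)$-H\"older increment of $f^{(i)}$) is exactly what the paper carries out in display \eqref{Holder lemma eq3}.
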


The decay of the wavelet coefficients $|\langle f^\alpha, \psi_{j,k}\rangle|$ characterizes the Besov norms $\|f^\alpha\|_{B_{pq}^\beta}$ (see Chapter 4 of \cite{Gine2015} for a full definition) and one could use Proposition \ref{prop.wav_decay_small} to prove Theorem \ref{thm.main_result}. However, while $B_{\infty\infty}^\beta = C^\beta$ for non-integer $\beta$, $B_{\infty\infty}^\beta$ equals the slightly larger H\"older-Zygmund space for integer $\beta$, resulting in a slight suboptimality when $\alpha\beta \in \mathbb{N}$. While Theorem \ref{thm.main_result} provides a more concise statement, the extra local information provided by the above wavelet bounds can be crucial to obtain sharp results, for example in certain non-linear statistical inverse problems \cite{RaySchmidtHieber2016}. Recall that $f\in C^\gamma$ if and only if $|\langle f,\psi_{j,k} \rangle| \lesssim 2^{-\frac j2 (2\gamma+1)}$ for all $(j,k)$ (with a slight correction for $\gamma \in \mathbb{N}$). From Proposition \ref{prop.wav_decay_small} we can therefore conclude that on low resolution levels, that is if $j$ is small, the wavelet coefficients have the decay of a $C^{\alpha \beta}$-function. The result also shows that on high frequencies the function is $C^\beta$ and quantifies the resolution level, which depends on the function value, at which the transition between these two cases occurs.

\section{Proof of Theorem \ref{thm.main_result}}
\label{sec.proofs}

The proof is based on two technical lemmas. The  first lemma identifies a local neighbourhood of each point on which the function is relatively constant.

\begin{lem}
\label{lem.local_func_size}
Suppose that $f \in \mF^\beta$ with $\beta>0$ and let $a = a(\beta)>0$ be any constant satisfying $(e^a-1) + a^\beta / (\lfloor \beta \rfloor!) \leq 1/2.$ Then for 
\begin{align*}
	|h| \leq a \left( \frac{|f(x)|}{\|f\|_{\mF^\beta}}\right)^{1/\beta},
\end{align*}
we have
\begin{equation*}
|f(x+h) - f(x)| \leq \frac{1}{2} |f(x)| ,
\end{equation*}
implying in particular, $|f(x)|/2 \leq |f(x+h)| \leq 3|f(x)|/2$.
\end{lem}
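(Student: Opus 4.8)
The plan is to estimate $f(x+h)-f(x)$ through a Taylor expansion of order $m:=\lfloor\beta\rfloor$ about $x$, controlling every term with the flatness condition \eqref{eq.flatness_cond}. Write $M:=\|f\|_{\mH^\beta}$. Since $|f|_{\mH^\beta}\leq M$, the flatness condition yields the pointwise derivative bounds
\[
|f^{(j)}(x)|\leq M^{j/\beta}\,|f(x)|^{1-j/\beta},\qquad 1\leq j\leq m,
\]
together with the top-order Hölder bound $|f^{(m)}(x+t)-f^{(m)}(x)|\leq |f|_{C^\beta}|t|^{\beta-m}\leq M|t|^{\beta-m}$. The crucial observation is that, under the hypothesis $|h|\leq a(|f(x)|/M)^{1/\beta}$, these bounds scale precisely so that the dependence on the (possibly tiny) value $|f(x)|$ cancels, leaving a factor that is summable in $j$ and independent of the local function size.

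First I would expand, using the integral form of the remainder (valid for $m\geq 1$),
\[
f(x+h)=\sum_{j=0}^{m}\frac{f^{(j)}(x)}{j!}h^j+\int_0^h\frac{(h-t)^{m-1}}{(m-1)!}\big[f^{(m)}(x+t)-f^{(m)}(x)\big]\,dt.
\]
For each polynomial term with $1\leq j\leq m$, inserting the derivative bound and $|h|^j\leq a^j(|f(x)|/M)^{j/\beta}$ gives $\tfrac{|f^{(j)}(x)|}{j!}|h|^j\leq \tfrac{a^j}{j!}|f(x)|$, so the whole Taylor sum (apart from $j=0$) is at most $\sum_{j=1}^{m}\tfrac{a^j}{j!}|f(x)|\leq (e^a-1)|f(x)|$. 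For the remainder I would bound $|t|^{\beta-m}\leq |h|^{\beta-m}$ on the interval of integration and then integrate the kernel, obtaining the clean estimate $\tfrac{|f|_{C^\beta}}{m!}|h|^\beta\leq \tfrac{M}{m!}|h|^\beta\leq \tfrac{a^\beta}{m!}|f(x)|$, again via $|h|^\beta\leq a^\beta|f(x)|/M$.

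Summing the two contributions gives $|f(x+h)-f(x)|\leq\big[(e^a-1)+a^\beta/m!\big]|f(x)|$, and the defining inequality for $a$ forces the bracket to be at most $1/2$, which is exactly the first claim; the two-sided bound $|f(x)|/2\leq|f(x+h)|\leq 3|f(x)|/2$ then follows at once from the reverse and ordinary triangle inequalities. The case $\beta\leq 1$ (where $m=0$ and the Taylor sum is empty) I would treat separately but identically: the estimate reduces to $|f(x+h)-f(x)|\leq|f|_{C^\beta}|h|^\beta\leq a^\beta|f(x)|$, still covered by the hypothesis on $a$. I do not anticipate a genuine obstacle; the only point demanding care is matching the remainder estimate to the exact factor $a^\beta/m!$ appearing in the hypothesis, which is precisely what dictates bounding $|t|^{\beta-m}$ by $|h|^{\beta-m}$ rather than evaluating the sharper Beta-function constant.
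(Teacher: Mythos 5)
Your proof is correct and follows essentially the same route as the paper's: a Taylor expansion of order $\lfloor\beta\rfloor$ about $x$, with the lower-order terms controlled by the flatness condition \eqref{eq.flatness_cond} and the top-order term by the H\"older seminorm, yielding exactly the bound $(e^a-1)+a^\beta/\lfloor\beta\rfloor!$. The only (immaterial) difference is that you use the integral form of the remainder where the paper uses the Lagrange form and then adds and subtracts $f^{(\lfloor\beta\rfloor)}(x)h^{\lfloor\beta\rfloor}/\lfloor\beta\rfloor!$.
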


\begin{proof}
Without loss of generality, we may assume that $\|f\|_{\mF^\beta}=1.$ By a Taylor expansion and the definition of $\mF^\beta$, there exists $\xi$ between $x$ and $x+h$ such that
\begin{align*}
|f(x+h) - f(x)| & = \left| \sum_{j=1}^{\lfloor \beta \rfloor-1} \frac{f^{(j)}(x) h^j}{j!}  +  \frac{1}{\lfloor \beta \rfloor!} f^{(\lfloor \beta \rfloor)}(\xi)h^{\lfloor \beta \rfloor} \right|  \\
& \leq \sum_{j=1}^{\lfloor \beta \rfloor} \frac{|f(x)|^{\frac{\beta-j}{\beta}} h^j}{j!}  +  \frac{1}{\lfloor \beta \rfloor!} \left| f^{(\lfloor \beta \rfloor)}(\xi) - f^{(\lfloor \beta \rfloor)}(x) \right| h^{\lfloor \beta \rfloor}   \\
& \leq \sum_{j=1}^{\lfloor \beta \rfloor} \frac{a^j }{j!} |f(x)|  + \frac{a^\beta}{\lfloor \beta \rfloor!} |f(x)| \leq \frac{1}{2}|f(x)| .
\end{align*}
\end{proof}

For $f\in \mF^\beta$, the function $f^\alpha$ satisfies a H\"older-type condition with exponent $\beta$ and locally varying H\"older constant. The following is the key technical result for establishing the smoothness of $f^\alpha$ and hence the decay of $|\langle f^\alpha, \psi_{j,k} \rangle |$. The main ingredient in the proof is a careful analysis of Fa\`a di Bruno's formula, which generalizes the chain rule to higher derivatives \cite{faadibruno1857}:
\begin{equation}
\frac{d^k}{dx^k} h(f(x)) = \sum_{(m_1,...,m_k) \in \mathcal{M}_k} \frac{k!}{m_1!...m_k!} h^{(m_1+...+m_k)}(f(x)) \prod_{j=1}^k \left( \frac{f^{(j)}(x)}{j!} \right)^{m_j} ,
\label{eq.FaaDiBruno}
\end{equation}
where $\mathcal{M}_k$ is the set of all $k$-tuples of non-negative integers satisfying $\sum_{j=1}^k j m_j = k$. Note that for $h(x) = x^\alpha$, we have $h^{(r)}(x) = C_{\alpha,r} x^{\alpha-r}$ for some $C_{\alpha,r} \neq 0$ (except the trivial case $\alpha =1$). We can relate the derivatives appearing in \eqref{eq.FaaDiBruno} to $f$ using the seminorm $|\cdot|_{\mF^\beta}$.

\begin{lem}
\label{lem.sqrt_lip}
For $\alpha\in(0,1]$, $\beta>0$, there exists a constant $C(\alpha,\beta)$ such that for all $f \in \mF^\beta$, $0\leq k <\beta$ and $x,y\in[0,1]$,
\begin{equation*}
|(f(x)^\alpha)^{( \lfloor \beta \rfloor )} - (f(y)^\alpha)^{(\lfloor \beta \rfloor)} | \leq \frac{C(\alpha,\beta) (|f|_{C^\beta} + |f|_{\mF^\beta}) }{\min (f(x)^{1-\alpha}, f(y)^{1-\alpha})} |x-y|^{\beta - \lfloor \beta \rfloor},
\end{equation*}
and 
\begin{equation}
\left| \frac{d^k}{dx^k} \left( f(x)^\alpha \right) \right| \leq C(\alpha,\beta) \|f\|_{\mF^\beta}^{k/\beta} f(x)^{\alpha-k/\beta}.
\label{eq.deriv_bds_sqrt}
\end{equation}
Moreover, if $f \geq \varepsilon > 0$,
\begin{align*}
\|f^\alpha\|_{\mF^\beta} \leq \frac{C(\alpha,\beta)}{\varepsilon^{1-\alpha}} \|f\|_{\mF^\beta}.
\end{align*}
\end{lem}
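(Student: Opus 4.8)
The plan is to control separately each of the four constituents of
\[
\|f^\alpha\|_{\mH^\beta} = \|f^\alpha\|_\infty + \|(f^\alpha)^{(\lfloor\beta\rfloor)}\|_\infty + |f^\alpha|_{C^\beta} + |f^\alpha|_{\mH^\beta},
\]
using the two bounds already established in the first part of the lemma together with the hypothesis $f\geq\varepsilon$. The recurring device is to split any power $f^{\alpha-\theta}$ with $\theta\in[0,1]$ as $f^{\alpha-1}f^{1-\theta}$, bounding the first factor (with nonpositive exponent) by $\varepsilon^{\alpha-1}=\varepsilon^{-(1-\alpha)}$ via $f\geq\varepsilon$, and the second factor (with nonnegative exponent) by $\|f\|_\infty^{1-\theta}\leq\|f\|_{\mH^\beta}^{1-\theta}$. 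This isolates the singular factor $\varepsilon^{-(1-\alpha)}$ from a factor controlled by $\|f\|_{\mH^\beta}$.

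For the two sup-norm terms, $\|f^\alpha\|_\infty=\|f\|_\infty^\alpha=\|f\|_\infty^{\alpha-1}\|f\|_\infty\leq\varepsilon^{-(1-\alpha)}\|f\|_{\mH^\beta}$, while applying \eqref{eq.deriv_bds_sqrt} with $k=\lfloor\beta\rfloor$ (valid since $\lfloor\beta\rfloor<\beta$) gives $|(f^\alpha)^{(\lfloor\beta\rfloor)}(x)|\leq C\|f\|_{\mH^\beta}^{\lfloor\beta\rfloor/\beta}f(x)^{\alpha-\lfloor\beta\rfloor/\beta}$; the split with $\theta=\lfloor\beta\rfloor/\beta$ merges the two powers of $\|f\|_{\mH^\beta}$ into one and yields $C\varepsilon^{-(1-\alpha)}\|f\|_{\mH^\beta}$. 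For the H\"older seminorm, the first displayed inequality of the lemma provides a modulus of continuity with constant $C(|f|_{C^\beta}+|f|_{\mH^\beta})/\min(f(x)^{1-\alpha},f(y)^{1-\alpha})$; since $f\geq\varepsilon$ the denominator is at least $\varepsilon^{1-\alpha}$, whence $|f^\alpha|_{C^\beta}\leq C\varepsilon^{-(1-\alpha)}(|f|_{C^\beta}+|f|_{\mH^\beta})\leq C\varepsilon^{-(1-\alpha)}\|f\|_{\mH^\beta}$.

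For the flatness seminorm I fix $1\leq j<\beta$, apply \eqref{eq.deriv_bds_sqrt} with $k=j$, and use $(f^\alpha(x))^{\beta-j}=f(x)^{\alpha(\beta-j)}$. A direct exponent computation gives
\[
\frac{|(f^\alpha)^{(j)}(x)|^\beta}{(f^\alpha(x))^{\beta-j}} \leq C^\beta\|f\|_{\mH^\beta}^{j}\,f(x)^{\alpha\beta-j-\alpha(\beta-j)} = C^\beta\|f\|_{\mH^\beta}^{j}\,f(x)^{-j(1-\alpha)} \leq C^\beta\|f\|_{\mH^\beta}^{j}\,\varepsilon^{-j(1-\alpha)},
\]
using $f\geq\varepsilon$ and $-j(1-\alpha)\leq0$ in the last step. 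Taking $j$-th roots and maximizing over $1\leq j<\beta$ gives $|f^\alpha|_{\mH^\beta}\leq C\varepsilon^{-(1-\alpha)}\|f\|_{\mH^\beta}$. Summing the four contributions yields the stated bound.

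I expect essentially no obstacle: all the analytic content is packaged in \eqref{eq.deriv_bds_sqrt} and the H\"older-type bound, so this step is bookkeeping with exponents. The one point needing care is that $\alpha-\lfloor\beta\rfloor/\beta$ may have either sign, which is exactly why the split $f^{\alpha-1}f^{1-\theta}$ --- handling the part controlled by the lower bound $\varepsilon$ and the part controlled by the upper bound $\|f\|_\infty$ separately --- is required rather than bounding $f$ above or below uniformly throughout.
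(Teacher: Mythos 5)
There is a genuine gap: your argument establishes only the final assertion (the bound on $\|f^\alpha\|_{\mH^\beta}$ when $f\geq\varepsilon$), taking the first two displayed inequalities as ``already established in the first part of the lemma.'' But those two inequalities are part of the statement being proven, not prior results, and they carry essentially all of the analytic content. The paper's proof of them occupies most of the argument: one applies Fa\`a di Bruno's formula \eqref{eq.FaaDiBruno} to $h(x)=x^\alpha$, uses the flatness condition \eqref{eq.flatness_cond} to bound each product $\prod_j (f^{(j)}(x))^{m_j}$ by $|f(x)|^{M-k/\beta}$ (which gives \eqref{eq.deriv_bds_sqrt}), and for the H\"older estimate splits into the cases $|x-y|\leq a f(x)^{1/\beta}$ and $|x-y|>a f(x)^{1/\beta}$. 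In the first case Lemma \ref{lem.local_func_size} makes $f(x)$, $f(y)$ and intermediate values comparable, and one bounds the difference of each Fa\`a di Bruno summand by a telescoping/product-rule argument, using the mean value theorem on $(f^{(j^*)})^{m_{j^*}}$ together with an interpolation between the Lipschitz bound and the sup bound to produce the exponent $\beta-\lfloor\beta\rfloor$; in the second case the triangle inequality combined with \eqref{eq.deriv_bds_sqrt} already gives the claim. None of this appears in your proposal, so as written it proves only the easiest third of the lemma.

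The part you do prove is correct and matches the paper's own final paragraph almost exactly: the paper likewise reduces $\|f^\alpha\|_{C^\beta}\leq C\|f\|_{\mH^\beta}/\varepsilon^{1-\alpha}$ to the first two statements, and handles $|f^\alpha|_{\mH^\beta}$ by rewriting \eqref{eq.deriv_bds_sqrt} as
\begin{equation*}
|(f(x)^\alpha)^{(j)}| \leq C\big(\|f\|_{\mH^\beta}/\varepsilon^{1-\alpha}\big)^{j/\beta}\,(f(x)^\alpha)^{(\beta-j)/\beta},
\end{equation*}
which is the same exponent bookkeeping as your computation of $|(f^\alpha)^{(j)}|^\beta/(f^\alpha)^{\beta-j}\leq C^\beta\|f\|_{\mH^\beta}^{j}\varepsilon^{-j(1-\alpha)}$. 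So the bookkeeping is fine; what is missing is the proof of the two inequalities it rests on.
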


\begin{proof}
Without loss of generality assume $f(y) \leq f(x)$ and $|f|_{C^\beta} + |f|_{\mF^\beta}=1$. For $\beta \in (0,1]$, by the mean value theorem,
\begin{align*}
|f(x)^\alpha - f(y)^\alpha| \leq \sup_{f(y) \leq t \leq f(x)} \alpha t^{\alpha-1} |f(x) - f(y)| \leq \frac{|x-y|^\beta }{f(y)^{1-\alpha}}.
\end{align*}
Consider now $\beta > 1$ and write $k = \lfloor \beta \rfloor$ (the following also holds for all $k\leq\lfloor\beta\rfloor$ with certain simplifications). We must consider separately the two cases where $|x-y|$ is small and large. Let $C(\alpha,\beta)$ be a generic constant, which may change from line to line.

Suppose first that $|x-y| \leq a f(x)^{1/\beta}$ with $a$ as in Lemma \ref{lem.local_func_size}. By Lemma \ref{lem.local_func_size} we have $f(y)/2 \leq f(x) \leq 3f(y)/2$, which will be used freely without mention in the following. The proof is based on a careful analysis of Fa\`a di Bruno's formula \eqref{eq.FaaDiBruno}.

We shall establish the result by proving a H\"older bound for each of the summands in \eqref{eq.FaaDiBruno} individually. Fix a $k$-tuple $(m_1,...,m_k) \in \mathcal{M}_k$ and write $M:=\sum_{j=1}^k m_j.$ By the triangle inequality
\begin{equation}
\begin{split}
&   \left|  f(x)^{\alpha- M}  \prod_{j=1}^k \left( f^{(j)}(x) \right)^{m_j} -  f(y)^{\alpha-M} \prod_{j=1}^k \left( f^{(j)}(y) \right)^{m_j}  \right| \\
& \leq \left|  \left( f(x)^{\alpha-M} - f(y)^{\alpha - M}  \right) \prod_{j=1}^k \left( f^{(j)}(x) \right)^{m_j}  \right|  \\
& \quad \quad + \left| f(y)^{\alpha-M} \left(  \prod_{j=1}^k \left( f^{(j)}(x) \right)^{m_j} -  \prod_{j=1}^k \left( f^{(j)}(y) \right)^{m_j}  \right)  \right| .
\end{split}
\label{Holder lemma eq1}
\end{equation}
Before bounding the terms in \eqref{Holder lemma eq1}, we require some additional estimates. Firstly, by the definition of $\mF^\beta$,
\begin{equation}
\left| \prod_{j=1}^k (f^{(j)}(x))^{m_j} \right|  \leq  \prod_{j=1}^k |f(x)|^\frac{(\beta-j)m_j}{\beta}  = |f(x)|^{M - k/\beta} .
\label{Holder lemma eq2}
\end{equation}
Secondly, for any function $g$ and integer $r \geq 1$, we have by the mean value theorem $g(x)^r - g(y)^r  =  r g(\xi)^{r-1} g'(\xi) (x-y)$ for some $\xi$ between $x$ and $y$. Noting that $f(\xi) \approx f(x) \approx f(y)$, that $\beta - k \in (0,1]$ and applying the above to $g(x) = f^{(j^*)}(x)$ with $r = m_{j^*}$ and $j^* \in \{ 1,...,k-1\}$ yields
\begin{equation}
\begin{split}
& \left| \left(  f^{(j^*)} (x) \right)^{m_{j^*}} - \left(  f^{(j^*)} (y) \right)^{m_{j^*}}    \right|  \\
& \leq \left| m_{j^*}  \left(  f^{(j^*)} (\xi_{x,y,j^*}) \right)^{m_{j^*}-1}     f^{(j^*+1)} (\xi_{x,y,j^*})  (x-y) \right|^{\beta-k}\\
& \quad \times \left| \left(  f^{(j^*)} (x) \right)^{m_{j^*}} + \left(  f^{(j^*)} (y) \right)^{m_{j^*}} \right|^{1-(\beta-k)}   \\
& \leq C(\beta)  m_{j^*}^{\beta-k} |f(\xi_{x,y,j^*}) |^{\frac{(m_{j^*} (\beta-j^*)-1)(\beta-k)}{\beta}} |x-y|^{\beta-k}  |f(y)|^{\frac{m_{j^*}(\beta-j^*)(1-\beta+k)}{\beta}}  \\
& \leq C(\beta) |f(y)|^{\frac{m_{j^*}(\beta-j^*) - (\beta-k)}{\beta}} |x-y|^{\beta-k} .
\label{Holder lemma eq3}
\end{split}
\end{equation}
Strictly speaking, we cannot invoke \eqref{Holder lemma eq3} for $j^* = k$, since $f$ is only $k$-times differentiable. However, noting that $m_k$ can only take values 0 or 1, we see directly from the H\"older continuity of $f^{(k)}$ that the conclusion of \eqref{Holder lemma eq3} holds as well for $j^* =k$ since we have the bound $|x-y|^{\beta-k}$. By the same argument as \eqref{Holder lemma eq3}
\begin{equation}
\begin{split}
|f(x)^{\alpha - M} - f(y)^{\alpha - M} | & \leq  C(\alpha,\beta)  \frac{ |f(y)|^{M - 1-\alpha + k/\beta} |x-y|^{\beta-k} }{ f(x)^{ M -\alpha}  f(y)^{ M - \alpha} }  \\
& \leq C(\alpha,\beta)  |f(y)|^{-M- 1+\alpha + k/\beta} |x-y|^{\beta-k}  .
\label{Holder lemma eq4}
\end{split}
\end{equation}
Using \eqref{Holder lemma eq2} and \eqref{Holder lemma eq4}, the first term in \eqref{Holder lemma eq1} is bounded by $C(\beta) |f(y)|^{\alpha-1} |x-y|^{\beta-k}$ as required.

For the second term in \eqref{Holder lemma eq1}, we repeatedly apply the triangle inequality, each time changing the variable in a single derivative. Fix $j^*$ and define vectors $(z_j^{j^*})_{j=1}^k$, $(\tilde{z}_j^{j^*})_{j=1}^k$ that are identically equal to $x$ or $y$ in all entries and differ only in the $j^*$-coordinate, where $z_{j^*}^{j^*} = x$, $\tilde{z}_{j^*}^{j^*} = y$. Using \eqref{Holder lemma eq3}
\begin{equation}
\begin{split}
& f(y)^{\alpha - M }  \left|  \prod_{j=1}^k \left( f^{(j)} (z_j^{j^*}) \right)^{m_j}  - \prod_{j=1}^k \left( f^{(j)} (\tilde{z}_j^{j^*}) \right)^{m_j}    \right| \\
&  = f(y)^{\alpha - M }  \prod_{j=1, j\neq j^*}^k  \left( f^{(j)} (z_j^{j^*}) \right)^{m_j} \left| \left(  f^{(j^*)} (x) \right)^{m_{j^*}} - \left(  f^{(j^*)} (y) \right)^{m_{j^*}}    \right| \\
& \leq C(\alpha,\beta) f(y)^{\alpha - M} \left( \prod_{j=1,j\neq j^*}^k |f(z_j^{j^*})|^\frac{(\beta-j)m_j}{\beta} \right) |f(y)|^{\frac{m_{j^*}(\beta-j^*) - (\beta-k)}{\beta}} |x-y|^{\beta-k} \\
& \leq C(\alpha,\beta)  |f(y)|^{\alpha - M + \sum_j (\beta-j)m_j/\beta  - (\beta-k)/\beta} |x-y|^{\beta-k} \\
& = C(\alpha,\beta)  |f(y)|^{\alpha-1} |x-y|^{\beta-k} ,
\label{Holder lemma eq5}
\end{split}
\end{equation}
where in the last line we have used that $\sum_j j m_j = k$. By repeatedly applying the triangle inequality and using \eqref{Holder lemma eq5}, we can bound the second term in \eqref{Holder lemma eq1} by
\begin{equation*}
 f(y)^{\alpha - M }  \sum_{j^*=1}^k \left|  \prod_{j=1}^k \left( f^{(j)} (z_j^{j^*}) \right)^{m_j}  - \prod_{j=1}^k \left( f^{(j)} (\tilde{z}_j^{j^*}) \right)^{m_j}    \right|  \leq  C(\alpha,\beta) |f(y)|^{\alpha-1} |x-y|^{\beta-k}  ,
\end{equation*}
thereby completing the proof in the case $|x-y| \leq a f(x)^{1/\beta}$.

Applying Fa\`a di Bruno's formula \eqref{eq.FaaDiBruno} and \eqref{Holder lemma eq2} yields
\begin{equation}
\left| \frac{d^k}{dx^k} \left( f(x)^\alpha \right) \right| \leq \sum_{(m_1,...,m_k) \in \mathcal{M}_k} C f(x)^{\alpha-M} f(x)^{M - k/\beta} \leq C f(x)^{\alpha-k/\beta}.
\label{Holder lemma eq6}
\end{equation}
For $|x-y| > a f(x)^{1/\beta}$ we thus have
\begin{align*}
|(f(x)^\alpha)^{(k)} - (f(y)^\alpha)^{(k)}| & \leq C(\alpha,\beta) (f(x)^{\alpha-k/\beta} + f(y)^{\alpha-k/\beta}) \\
& \leq \frac{C(\alpha,\beta)}{f(y)^{1-\alpha}} f(x)^\frac{\beta-k}{\beta} \leq \frac{C(\alpha,\beta)}{f(y)^{1-\alpha}} |x-y|^{\beta-k},
\end{align*}
as required. This completes the proof for the first statement. Note that \eqref{eq.deriv_bds_sqrt} follows directly from \eqref{Holder lemma eq6}, since this last expression also holds for all $0\leq k<\beta$.

Suppose now $f \geq \varepsilon > 0$. It follows immediately from the results that have just been established that $\|f^\alpha\|_{C^\beta} \leq C \|f\|_{\mF^\beta}/\varepsilon^{1-\alpha}$. By \eqref{eq.deriv_bds_sqrt},
\begin{align*}
|(f(x)^\alpha)^{(j)}| \leq C\|f\|_{\mF^\beta}^{j/\beta} f(x)^{\alpha-j/\beta} (f(x)/\varepsilon)^{\frac{j(1-\alpha)}{\beta}} = C (\|f\|_{\mF^\beta}/\varepsilon^{1-\alpha})^\frac{j}{\beta} (f(x)^\alpha)^{\frac{\beta-j}{\beta}}
\end{align*}
for all $1\leq j<\beta$, so that also $|f^\alpha|_{\mF^\beta} \leq C\|f\|_{\mF^\beta}/\varepsilon^{1-\alpha}$.
\end{proof}

\begin{proof}[Proof of Theorem \ref{thm.main_result}]
By rescaling, we may assume that $\|f\|_{\mF^\beta}=1$. Throughout the proof, $C$ denotes a generic constant which only depends on $\alpha$ and $\beta$ and may change from line to line. It is enough to prove $\|f^\alpha\|_{\mF^{\alpha \beta}} \leq C.$ With \eqref{eq.deriv_bds_sqrt}, we find that $\|f^\alpha\|_\infty + \|(f^\alpha)^{(\lfloor \alpha\beta \rfloor )}\|_\infty +  | f^\alpha|_{\mF^{\alpha\beta}} \leq C.$ It thus remains to establish $|f^\alpha|_{C^{\alpha \beta}} \leq C.$

Write $\alpha \beta = \ell + \sigma$ with $\ell = \lfloor \alpha \beta \rfloor$ and $0<\sigma \leq 1.$ Let $x,y \in [0,1]$ with $x< y$ and define $\Delta_{\alpha, \ell}(x,y) := |( f(x)^\alpha)^{(\ell)} - ( f(y)^\alpha)^{(\ell)}|.$ By \eqref{eq.deriv_bds_sqrt}, 
\begin{align}
	\Big | \big( f(x)^\alpha\big)^{(\ell)} \Big| \leq C f(x)^{\alpha -\ell/\beta}.
	\label{eq.p1}
\end{align}
If $|y-x| \geq a (f(x)\vee f(y))^{1/\beta},$ we find $\Delta_{\alpha, \ell}(x,y) \leq C |x-y|^\sigma.$

On the contrary, if $|y-x| \leq a(f(x)\vee f(y))^{1/\beta},$ we conclude from Lemma \ref{lem.local_func_size} that $f(x)/2 \leq f(z) \leq 2f(x)$ for all $x\leq z \leq y.$ Consider first the case $\ell < \lfloor \beta \rfloor.$ With \eqref{eq.deriv_bds_sqrt}, 
\begin{align}
	\Delta_{\alpha, \ell}(x,y) \leq
	\sup_{z\in [x,y]} \Big|\big( f(z)^\alpha\big)^{(\ell+1)}\Big| |y-x| \leq   C f(x)^{\alpha -(\ell+1)/\beta} |y-x|.
	\label{eq.p2}
\end{align}
Write $\Delta_{\alpha, \ell}(x,y) =\Delta_{\alpha, \ell}(x,y)^{1-\sigma} \Delta_{\alpha, \ell}(x,y)^\sigma$ and bound the first factor using \eqref{eq.p1} and the second factor using \eqref{eq.p2}. This shows that $\Delta_{\alpha, \ell}(x,y)\leq C|x-y|^\sigma.$ 

Consider finally the case $\ell = \lfloor \beta \rfloor.$ Let $\theta = \sigma /(\beta -\ell)$ and observe that $\Delta_{\alpha, \ell}(x,y) =\Delta_{\alpha, \ell}(x,y)^{1-\theta} \Delta_{\alpha, \ell}(x,y)^\theta \leq C |y-x|^\sigma$, bounding the first factor using \eqref{eq.p1} and the second factor using the first assertion of Lemma \ref{lem.sqrt_lip}.
\end{proof}

\begin{proof}[Proof of Proposition \ref{prop.wav_decay_small}]
We may again assume that $\|f\|_{\mF^\beta}=1$ for both statements. If the wavelet function is $S$-regular and $f\in C^\beta([0,1])$ for $0 < \beta<S,$ then there exists a function $g$ with $\|g\|_\infty \leq 1,$ such that for any $x_0\in (0,1),$ 
\begin{align*}
	\left| \int f(x) \psi_{j,k}(x) dx \right| 
	\leq \frac{1}{\lfloor \beta \rfloor!} \left| \int \big[ f^{(\lfloor \beta \rfloor)}(x_0+g(y) (y-x_0))- f^{(\lfloor \beta \rfloor)}(x_0) \big] (y-x_0)^{\lfloor \beta \rfloor} \psi_{j,k} (y) dy \right|.
\end{align*} 
This wavelet bound can be easily proved by Taylor expanding $f$ and using the moment properties of the wavelet function. 

For $\eta >0$, note that $|f+\eta|_{C^\beta} = |f|_{C^\beta}$ and $|f+\eta|_{\mF^\beta} \leq |f|_{\mF^\beta}$, since adding a positive constant function to $f\geq0$ can only reduce the $|\cdot|_{\mF^\beta}$-seminorm. Applying Lemma \ref{lem.sqrt_lip} thus yields $|(f + \eta)^\alpha|_{C^\beta} \leq C/\eta^{1-\alpha}$. Using that $|(x + \eta)^\alpha - x^\alpha| \leq \eta^\alpha$ for all $x \geq 0$ and the wavelet bound for $C^\beta$-functions, we have
\begin{equation*}
|\langle f^\alpha, \psi_{j,k} \rangle| \leq \left| \int (f + \eta)^\alpha \psi_{j,k} \right|  +  \left|  \int ((f + \eta)^\alpha - f^\alpha) \psi_{j,k} \right| \leq \frac{C}{\eta^{1-\alpha}} 2^{-\frac{j}{2}(2\beta+1)} + C'\eta^\alpha 2^{-j/2} .
\end{equation*}
Optimizing over $\eta$ yields $\eta = 2^{-j\beta}$ and thence the first result.

We show that for all $j \geq j(x_0)$, the support of any $\psi_{j,k}$ with $\psi_{j,k}(x_0) \neq 0$ is contained in the set $\{ x : f(x_0) /2 \leq f(x) \}$. To see this observe that for any such $\psi_{j,k}$ it holds that $|\supp(\psi_{j,k})| \leq 2^{-j}|\supp(\psi)| \leq a f(x_0)^{1/\beta}$ and so applying Lemma \ref{lem.local_func_size} yields that $f(t) \geq f(x_0) - |f(t) - f(x_0)| \geq f(x_0)/2$ for any $t \in \supp(\psi_{j,k})$. Using this and applying Lemma \ref{lem.sqrt_lip}, we have that $\|f^\alpha\|_{\mF^\beta(\supp(\psi_{j,k}))} \leq C(\beta)/(f(x_0)/2)^{1-\alpha}$, where the first norm refers to $f^\alpha$ restricted to $\supp(\psi_{j,k})$ with the obvious modification of $\|\cdot\|_{\mF^\beta}$ to this set. Applying the wavelet bound above to such $(j,k)$ then yields the result.
\end{proof}

\section*{Acknowledgements} 

We would like to thank Armin Rainer for useful suggestions and an anonymous referee for suggesting a more streamlined proof.

\bibliographystyle{acm}    
\bibliography{bibhd}           

\end{document}